\newtheorem{thm}{Theorem}[section]
\newtheorem{prop}[thm]{Proposition}
\newtheorem{lemma}[thm]{Lemma}
\newtheorem{defn}[thm]{Definition}
\newtheorem{preremark}[thm]{Remark}
\newenvironment{remark}{\begin{preremark}\rm}{\medskip \end{preremark}}
\numberwithin{equation}{section}
\newcommand{\abs}[1]{\left\vert#1\right\vert}
\newcommand{\R}{\mathbb R}
\newcommand{\lt}{\tilde\theta}
\newcommand{\lp}{\tilde\phi}
\newcommand{\al}{\alpha}
\newcommand{\bt}{\beta}
\newcommand{\grp}{g^{\rho\phi}}
\DeclareMathOperator{\Vol}{Vol}
\newcommand{\eps}{\varepsilon}
\newcommand{\dd} {\mathrm{d}}
\DeclareMathOperator{\dv}{div}
\DeclareMathOperator{\Def}{Def}
\DeclareMathOperator{\Ric}{Ric}
\DeclareMathOperator{\curl}{curl}
\DeclareMathOperator{\two}{\mathrm I\mathrm I}
\DeclareMathOperator*{\diag}{diag}
\def\be{\begin{equation}}
\def\ee{\end{equation}}
\def\bs{\begin{split}}
\def\ess{\end{split}}
\newcommand{\tg}{\tilde g}
\newcommand{\tn}{\tilde \nabla}
\def\ag{\abs{\nabla\rho}}
\begin{document}
\title[Thin shell limit]{Thin shell limit and the derivation of the viscosity operator on the ellipsoid}

\author[Chan]{Chi Hin Chan}
\address{Department of Applied Mathematics, National Yang Ming Chiao Tung University,1001 Ta Hsueh Road, Hsinchu, Taiwan 30010, ROC}
\email{cchan@math.nctu.edu.tw}

\author[Czubak]{Magdalena Czubak}
\address{Department of Mathematics\\
University of Colorado Boulder\\ Campus Box 395, Boulder, CO, 80309, USA}
\email{czubak@math.colorado.edu}
  
  \author[Fuster]{Padi Fuster Aguilera}
\address{Department of Mathematics\\
University of Colorado Boulder\\ Campus Box 395, Boulder, CO, 80309, USA}
\email{padi.fuster@colorado.edu}

\begin{abstract}
 In this paper we derive four new candidates for an intrinsic viscosity operator on an ellipsoid by using the heuristic of the thin shell limit along the scaling direction of the ellipsoid.  We show that the general method of the thin shell limit through the asymptotic expansion depends on the averaging method used.  We consider both the homogeneous Navier and Hodge boundary conditions. We also obtain a geometric representation of these two boundary conditions.
\end{abstract}
%\date{\today}
\subjclass[2010]{35Q35, 41A60;}
\keywords{thin shell limit, asymptotic expansion, viscosity operator, Navier-Stokes, ellipsoid}
\maketitle

\tableofcontents
\section{Introduction}
 
The thin shell limit is a method of deriving equations in lower dimensions from an equation in higher dimensions.  The method is used in several fields, in particular, in elasticity and fluid mechanics.  In elasticity, it can be applied to derive the Kirchhoff-Love model \cite{CiarletII}.  The first rigorous results on the Navier-Stokes equations in thin shell domains go back to the works of Raugel and Sell \cite{RaugelSell89, RS3, RS1, RS2}.  These works have been followed by many authors, see for example \cite{TemamZianeR, Avrin96, MTZ, TemamZiane, Iftimie, Miura1, Miura2, Miura3, Miura_stationary} and references therein.  In particular, Temam and Ziane \cite{TemamZiane} considered the non-stationary Navier-Stokes equation in a thin shell around the sphere and by taking the thickness of the shell to zero derived a corresponding equation on the sphere.  Then in a series of papers, Miura \cite{Miura1, Miura2, Miura3, Miura_stationary} has been studying the thin shell limit for a general surface.  Before obtaining the above rigorous results, Miura presented a heuristic derivation in \cite{Miura}.  That article is a motivation for this current work. 

The main idea is to take a vector field that would be a solution to the 3D problem, and write its asymptotic expansion based on the distance to the surface.  The common way, also done by Miura, is to consider an asymptotic expansion using a regular distance function.  In this article, the surface we consider is an ellipsoid, and in that setting, there is another natural way to measure the ``distance" from the ellipsoid.  

The ellipsoid $E$ can be viewed as a level set of a defining function $\rho$, where
\[
\rho(x,y, z)=\sqrt{\frac{x^2+y^2+a^2z^2}{a^2}}, \quad a>0.
\]
Then, we can let $E=\rho^{-1}(1)$.  It is natural to consider rescaled ellipsoids $E_{1+\varepsilon}=\rho^{-1}(1+\varepsilon)$, $\varepsilon>0$, and for $\varepsilon_0>0$ consider the Navier-Stokes equation in the thin shell near $E$ defined by
\[
\mathcal N(\varepsilon_0)=\cup_{0<\varepsilon <\varepsilon_0} E_{1+\varepsilon}.
\]
The boundary of $\mathcal N(\varepsilon_0)$ consists of $E$ and $E_{1+\varepsilon_0}$, and the idea is to take $\varepsilon_0$ to zero. Note, we could also consider rescaled ellipsoids in both directions of $E$, but we choose to follow \cite{TemamZiane} in the setup of the problem. 

Such domain and rescaled ellipsoids $E_{1+\varepsilon}$ are natural in the sense that the geometry of $E$ gets preserved modulo the scaling.  For example, the principal curvatures get scaled by $\frac 1{1+\varepsilon}$.

Since $\mathcal N(\varepsilon_0)$ is a bounded domain, we need to impose boundary conditions.  Temam and Ziane \cite{TemamZiane} use the Hodge condition, also referred to as the free boundary condition, and Miura \cite{Miura, Miura3, Miura_stationary} imposes the Navier boundary condition.  In this article, we consider both of these boundary conditions, and show that the asymptotic expansion in the scaling direction produces different operators than the ones obtained in \cite{TemamZiane} and in \cite{Miura, Miura3, Miura_stationary}.  

With the Hodge boundary condition, the Navier-Stokes equations obtained using the thin shell  limit in \cite{TemamZiane} are the equations where the Laplacian is the Hodge Laplacian.  Recall, the Hodge Laplacian is
\[
\dd\dd^\ast+\dd^\ast\dd,
\]
where $\dd$ is the exterior derivative, and $\dd^\ast$ its formal adjoint.

The Navier boundary condition in  \cite{Miura, Miura3, Miura_stationary} produces the operator
\be\label{divdef}
-2\dv \Def
\ee
where $\Def$ is the deformation tensor, which when written in coordinates is
\be\label{Defd}
 {(\Def u)}_{ij}=\frac 12 (\nabla_i u_j +\nabla_j u_i),
\ee 
where $\nabla$ is the Levi-Civita connection.  In general, the two operators are not the same, but related by a formula obtained by a computation using Ricci identities.  The formula is
 \be\label{BW1}
 -2\dv\Def u=\dd\dd^\ast u+\dd^\ast\dd u+\dd \dd^\ast u-2\Ric u,
 \ee 
 where $u$ is a $1$-form, and 
 \[
 \Ric u=\Ric (u^\sharp, \cdot)^\flat,
 \]
 with $\Ric$ the Ricci tensor, and $u^\sharp$ corresponding vector field obtained by using the musical isomorphism $\sharp$.  We refer to the operator \eqref{divdef} as the deformation Laplacian.  
 
 In the Euclidean space, since the curvature is identically zero, 
 the deformation Laplacian and the Hodge Laplacian are the same operator.  However, since they are not the same for a general Riemannian manifold, this is one of the main reasons there is no universal agreement what the ``correct" form of the equations should be in the setting of Riemannian manifolds.  
 
 There is also another type of Laplacian that we could consider.  It is the Bochner Laplacian, $-\dv \nabla$.  All three Laplacians are related by the following formula

 \be\label{BW}
 -2\dv\Def u=-\dv \nabla u -\Ric u+\dd \dd^\ast u=\dd\dd^\ast u+\dd^\ast\dd u+\dd \dd^\ast u-2\Ric u.
 \ee

 In this article, we arrive at yet four other types of operators.  In general, we take the point of view that we should look for an appropriate viscosity operator rather than only for the right choice of the Laplacian.  We refer the interested readers to \cite{Czubak2024} for more on this topic, and to the references therein.

 The derivation of the new operators is one of the main contributions of this article.  The operators are derived by considering the asymptotic expansion in the scaling direction, and by first considering the Navier boundary condition, and then the Hodge condition, and considering vector fields with or without divergence free conditions on $\R^3$ and on $E$.

 % The direction of the expansion is directly related to the direction the average is taken in the rigorous results.  

 The direction of the asymptotic expansion is connected to the direction the average is taken when the limiting equations are derived rigorously.  In the rigorous results, an average operator $Mv$ is typically considered, where
\[
Mv=\frac 1\eps \int_0^{\eps} v(y+\sigma N_y)d\sigma, \quad y\in S,
\]
where $S$ is the surface around which we consider the thin shell, and $N_y$ is the choice of the normal at $y$. If $v$ is assumed to be written as a series expansion in terms of the distance $\sigma$ from the boundary, i.e., 
\[
v=\sigma^\alpha U_\alpha,
\]
where $\alpha$ runs from zero to infinity, and $U_\alpha$ are assumed to be vector fields on $S$ translated in the normal direction, then we can see
\[
Mv=U_0+O(\eps).
\]
This and the fact that by considering the expansions in the scaling direction and obtaining different operators than the ones by considering the average in the normal direction suggests that the operators obtained by the thin shell limit depend not only on the boundary conditions, but also on the direction of the average taken.  

The average in the normal directions for domains with nonzero curvature with the Hodge boundary condition was only considered for the sphere \cite{TemamZiane} (see \cite{TemamZianeR} for rectangular domains).  Therefore, we also consider the Hodge boundary condition for the thin domain around the ellipsoid $E$, and perform the asymptotic expansion in the normal direction.  We show it produces the Hodge Laplacian, just like it did for the sphere.

In addition, we find what appears to be a novel way to interpret the Navier and Hodge conditions.  We show they both can be given a geometric interpretation.  In \cite{MitreaMonniaux} the relationship was derived between the Navier condition and the Hodge condition.  In this paper, we show that the Navier condition for the vector field $v$ can be interpreted as the projected Lie bracket of the vector field $v$ with the normal $N$, and the Hodge condition as the Lie derivative of the associated $1$-form $v^\flat$ in the direction of the normal $N$.  To put it simply, the two conditions can be both interpreted as Lie derivatives.  

Since we do not rigorously show convergence of solutions in any function space and only consider an asymptotic expansion, we claim that the derivation of the operators is a heuristic.  At the same time, we aim to justify what we can in this context.  For example, in the Appendix we provide a careful construction of an asymptotic expansion of a real analytic vector field in a thin shell.

Our computations are based on a formula obtained in \cite{CC25}.  Article \cite{CC25} gives a collection of Gauss formulas for different types of Laplacians on submanifolds.
In general, Gauss formulas provide a breakdown of an extrinsic object into the tangential and the normal part with a goal to relate the intrinsic quantities to the extrinsic ones. In the case of the Gauss formula for the Laplacian, we are interested in relating the extrinsic Laplacian to an intrinsic operator.   The formula that we use from \cite{CC25} is the formula for a surface embedded in $\R^3$.  In comparison, Miura \cite{Miura} works in Cartesian coordinates and makes use of the so-called G\"unter derivatives: Euclidean partial derivative operators orthogonally projected onto the tangent space of the surface.  Such derivatives also appear, for example, in finite element methods \cite{DziukElliott}.  We show our approach can also recover Miura's result in the case of the ellipsoid and the asymptotic expansion in the normal direction with the Navier boundary condition.

We are now ready to precisely state our main results.

\subsection{Main Results}
We establish the following theorem.
\begin{thm}\label{thm1}
Let $a,\eps>0$, and $E=\{{x^2+y^2+a^2z^2}=a^2\}$ be the ellipsoid embedded in $\R^3$, and $E_{1+\eps}=\{{x^2+y^2+a^2z^2}=a^2(1+\eps)\}$ the rescaled ellipsoid.  Given a vector field $v$ in the thin shell with boundary $E \cup E_{1+\varepsilon}$, where $v$ is also assumed to be tangential to all the rescaled ellipsoids $E_{1+b}, 0\leq b\leq \eps$, the heuristic of the thin shell limit using the asymptotic expansion of $v$ in the scaling direction produces the following operator on $E$ for the zeroth order term $U_0$ in the expansion

\be\label{o1}
-2\dv \Def U_0 +[c^3_{13}E_1, U_0]+2 \big ( c_{13}^3 \big )^2 U_0^1 E_1,
\ee
for the perfect Navier-slip boundary condition, and 
\be\label{o2}
(\dd^\ast \dd+\dd \dd^\ast) U_0^\flat+\mathcal L_{c_{13}^3 {E_1}} U_0^\flat,
\ee
for the homogeneous Hodge boundary condition, where 
\begin{itemize}
\item $\Def$ is the deformation tensor \eqref{Defd},
\item $[\cdot, \cdot]$ is the Lie bracket,
\item  $E_1$ is a unit vector field in the direction along the meridians on $E$,
\item $c^3_{13}=- \frac{1}{4} E_1 \big ( \log K_E \big )$, with $K_E$ the Gaussian curvature of $E$,
\item $\dd^\ast \dd+\dd \dd^\ast$ is the Hodge Laplacian on $E$,
\item $U^\flat_0$ is the $1$-form obtained by the musical isomorphism,
\item $\mathcal L$ is the Lie derivative of a $1$-form.
\end{itemize}
\end{thm}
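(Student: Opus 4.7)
The plan is to work in an orthonormal frame $\{E_1, E_2, E_3\}$ defined on $\mathcal N(\eps_0)$ and adapted to the foliation by rescaled ellipsoids: $E_3$ points along the scaling direction (parallel to $\grad\rho$, normalized so that its integral curves parameterize the family $E_{1+b}$), while $E_1, E_2$ are tangent to every $E_{1+b}$, with $E_1$ along the meridians. The first step is to compute the structure constants $c^k_{ij}$, defined by $[E_i,E_j]=c^k_{ij}E_k$, from the explicit form of $\rho$. In particular one verifies that $c^3_{13}=-\tfrac{1}{4}E_1(\log K_E)$, which reflects how the rescaling distorts the tangential frame and is ultimately controlled by the Gaussian curvature $K_E$ of the ellipsoid.

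Next, the assumption that $v$ is tangential to every $E_{1+b}$ allows one to write
\[
v=\sum_{\alpha\ge 0}\eps^{\alpha}U_\alpha,
\]
with each $U_\alpha$ tangential to all the rescaled ellipsoids. Plugging this expansion into the appropriate $3$D viscosity operator (the deformation Laplacian $-2\dv\Def$ in the Navier case, the Hodge Laplacian $\dd\dd^\ast+\dd^\ast\dd$ on $1$-forms in the Hodge case) and applying the Gauss formulas from \cite{CC25} splits the result into its intrinsic part on $E$ and extrinsic corrections written in terms of the $c^k_{ij}$. Expanding in powers of $\eps$ and collecting the $\eps^0$ coefficient produces the candidate operator on $E$, with the prescribed boundary condition being used to fix the first jet of $v$ at $\eps=0$ in terms of $U_0$.

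For the perfect Navier-slip condition, the boundary condition forces a specific relation between $U_1$ and $U_0$; substituting this into the zeroth-order term of $-2\dv\Def v$ and grouping the extrinsic contributions yields, in addition to the intrinsic $-2\dv\Def U_0$ on $E$, a commutator piece that assembles into the Lie bracket $[c^3_{13}E_1,U_0]$ and a lower-order piece $2(c^3_{13})^2 U_0^1 E_1$, giving \eqref{o1}. For the Hodge condition, which by the geometric reformulation described in the introduction can be recast as $\mathcal L_N v^\flat = 0$ on $E$, the same procedure applied to the Hodge Laplacian on $v^\flat$ yields the intrinsic $(\dd^\ast\dd+\dd\dd^\ast)U_0^\flat$ plus a correction that organizes into the Lie derivative $\mathcal L_{c^3_{13}E_1}U_0^\flat$, giving \eqref{o2}.

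The main obstacle is the careful bookkeeping of the extrinsic corrections that arise from the mismatch between the scaling direction $E_3\propto\grad\rho$ and the unit normal to each $E_{1+b}$: these two directions coincide only in the spherical case, and their discrepancy on a general ellipsoid is exactly what produces the $c^3_{13}$-dependent terms absent from the normal-direction expansions of \cite{TemamZiane, Miura, Miura3, Miura_stationary}. Verifying that the raw algebraic corrections produced by the Gauss-formula decomposition truly reassemble into geometric objects such as a Lie bracket and a Lie derivative, rather than remaining as formal combinations of frame coefficients, is the delicate step where the structure of the ellipsoid and the identity for $c^3_{13}$ in terms of $K_E$ come in decisively.
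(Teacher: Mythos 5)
Your outline reproduces the broad architecture of the paper's argument (adapted orthonormal frame, the structure constant $c^3_{13}$ and its identification with $-\tfrac14 E_1(\log K_E)$, expansion in the scaling parameter, boundary conditions used to eliminate higher coefficients, and a final reassembly into a Lie bracket/Lie derivative), but there are three concrete gaps. First, the choice of three--dimensional operator: the paper does \emph{not} plug the expansion into $-2\dv\Def$ for the Navier case and into the Hodge Laplacian for the Hodge case. It starts, in both cases, from the single Gauss-type formula \eqref{project} of \cite{CC25} for the tangential part of the flat Laplacian $(-\dv\tn v)^T$, and the two different limiting operators \eqref{o1} and \eqref{o2} arise purely from the two boundary conditions. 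In flat space the $3$D Hodge Laplacian coincides componentwise with $-\dv\tn$, so your Hodge branch happens to start from the same operator; but $-2\dv\Def v=-\dv\tn v-\grad(\dv v)$, and under the hypotheses of Theorem \ref{thm1} the field $v$ is only tangential, not divergence free in $\R^3$, so your Navier branch starts from a genuinely different operator and there is no argument that the projected $-\grad(\dv v)$ contribution disappears or reassembles into \eqref{o1}. (Note that even imposing divergence-free conditions changes the answer: compare \eqref{full} with \eqref{full_divfree} and \eqref{o1} with \eqref{o3}.) Second, fixing only the ``first jet'' ($U_1$ in terms of $U_0$) is not enough: the Gauss formula contains $(\tn_N\tn_N v)^T$, hence $\partial_\rho^2 v$ at $\rho=1$, which brings in $U_2$. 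The paper must also expand the boundary condition on the outer boundary to order $\eps^1$ (equations \eqref{eps1p} and \eqref{eps1p:hodge}) to eliminate $U_2$; without this the zeroth-order operator is underdetermined.

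Third, your description of the geometry is inverted in a way that matters. You set $E_3$ ``along the scaling direction (parallel to $\grad\rho$)'' and later attribute the new terms to ``the mismatch between the scaling direction $E_3\propto\grad\rho$ and the unit normal.'' But $\grad\rho$ is always parallel to the unit normal of the level sets $E_{1+b}$; the scaling direction is the radial field $\partial_\rho$, and the relevant mismatch is between $\partial_\rho$ and $N=\grad\rho/\abs{\grad\rho}$, which is precisely where $c^3_{13}$ (the normal component of $[E_1,N]$) comes from. Taken literally, expanding along integral curves of $\grad\rho$ is the normal-direction expansion of Theorem \ref{thm2} and Section \ref{comp}, which produces the plain deformation and Hodge Laplacians rather than \eqref{o1}--\eqref{o2}; the paper's expansion \eqref{ppo1} is in powers of $\rho-1$ with the $U_\alpha$ parallel along radial lines. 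Finally, the ``delicate step'' you flag is carried out in the paper by Lemma \ref{Key1} together with the Weitzenb\"ock-type identity \eqref{BW} on $E$; acknowledging it is fine as a plan, but combined with the issues above, the proposal as written does not yet yield Theorem \ref{thm1}.
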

\begin{remark}
We observe that the Navier boundary condition produces the deformation Laplacian just like the thin limit in the normal direction with two additional terms.  The additional terms both involve the coefficient function $c^3_{13}$.  The notation used here is due to how it originally arrives from the Lie bracket of $E_1$ with the normal (see Section \ref{prelim}). Interestingly enough, $c^3_{13}$ can be interpreted in several ways.  Since we seek an intrinsic formula, we use the intrinsic interpretation as above, in terms of the Gaussian curvature.

Similarly, the Hodge boundary condition produces the Hodge Laplacian, just like it does for the normal direction, with one additional term involving the Lie derivative.  In the Lie derivative, the vector field $c^3_{13}E_1$ again plays a role.  What is interesting to note is the connection of formula \eqref{o2} with the formula derived in \cite{CC25} for the surface of revolution and in \cite{CCY_22} specifically for the ellipsoid $E$.  Both formulas in these papers contained the Lie derivatives related to the Lie derivative appearing in \eqref{o2}.  The process of obtaining these formulas is different from the one here and yet the Lie derivative with respect to the vector field $c^3_{13}E_1$ makes an appearance.
\end{remark}
The next theorem is in the setting of a vector field $v$ in the thin domain, where $v$ is assumed to be tangential to the boundary, and is divergence free both on $\R^3$, and on the ellipsoid $E$.

\begin{thm}\label{thmd}
Let $a,\eps>0$, and $E=\{{x^2+y^2+a^2z^2}=a^2\}$ be the ellipsoid embedded in $\R^3$, and $E_{1+\eps}=\{{x^2+y^2+a^2z^2}=a^2(1+\eps)\}$ be the rescaled ellipsoid.  Given a divergence free vector field $v$ in the thin shell with boundary $E \cup E_{1+\varepsilon}$, such that $v$ is also divergence free on $E$, the heuristic of the thin shell limit using the asymptotic expansion of $v$ in the scaling direction with the Navier boundary condition produces the following operator on $E$ for the zeroth order term $U_0$ in the expansion
\be\label{o3}
-2\dv \Def U_0 +[c^3_{13}E_1, U_0],
\ee
and with the Hodge boundary condition, we obtain
\be\label{o4}
(\dd^\ast \dd+\dd \dd^\ast) U_0^\flat+\mathcal L_{c_{13}^3 {E_1}} U_0^\flat- 2 \big ( c_{13}^3 \big )^2 U_0^1 E^1.
\ee
 \end{thm}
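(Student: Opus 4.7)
The plan is to adapt the derivation of Theorem \ref{thm1} by enforcing the two additional constraints $\dv v=0$ throughout the thin shell and $\dv U_0=0$ on $E$. I would use the same adapted frame $(E_1,E_2,E_3)$ with $E_1,E_2$ tangent to each rescaled ellipsoid $E_{1+\sigma}$ and $E_3$ in the scaling direction, and the same ansatz $v=\sum_{\alpha\geq 0}\sigma^\alpha U_\alpha$ in which each $U_\alpha$ is a vector field on $E$ translated along $E_3$. The starting point is the Gauss-type formula from \cite{CC25} substituted into the expansion, with the boundary conditions (Navier or Hodge) imposed at $\sigma=0$ and $\sigma=\eps$ exactly as in the proof of Theorem \ref{thm1}.

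For the Navier case producing \eqref{o3}, I would begin from the output \eqref{o1} of Theorem \ref{thm1} and isolate the subterm $2(c_{13}^3)^2 U_0^1 E_1$ inside $-2\dv \Def U_0$ written in the adapted frame. The goal is to show that this subterm is precisely the contribution generated by a nonzero three-dimensional divergence of $v$ at leading order, together with the tangential divergence $\dv U_0$ on $E$. Once both divergence-free constraints are inserted into the asymptotic expansion, this contribution disappears and one is left with exactly \eqref{o3}. For the Hodge case producing \eqref{o4}, the reasoning is parallel but works in the opposite direction: the Hodge boundary condition constrains $\mathcal L_N v^\flat$ along the boundary, and when combined with the two divergence hypotheses the leading-order equation acquires the additional quadratic curvature term $-2(c_{13}^3)^2 U_0^1 E^1$, which was absent in \eqref{o2}.

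The main obstacle I expect is the careful bookkeeping of how the divergence-free relations, propagated through the asymptotic ansatz in the scaling direction, interact with the structure constants $c_{ij}^k$ of the adapted frame. Because $E_3$ is not normal to $E$, these structure constants are nontrivial, and the coefficient $c_{13}^3=-\tfrac14 E_1(\log K_E)$ enters both linearly, through the Lie bracket $[c_{13}^3 E_1, U_0]$, and quadratically, through $(c_{13}^3)^2$. One has to check that expanding $\dv v=0$ in powers of $\sigma$ yields, at order $\sigma^0$, a relation fully compatible with $\dv U_0=0$ on $E$, and does not impose constraints on the higher-order $U_\alpha$ that would secretly re-introduce any of the eliminated or added terms into the zeroth-order operator.

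A secondary but conceptually delicate point is the raising and lowering of indices in the Hodge case: verifying that the quadratic correction appears as $U_0^1 E^1$ rather than $U_0^1 E_1$ requires tracking the musical isomorphism relative to the scaling frame. Since the rest of \eqref{o4} is manifestly $1$-form-valued, consistency of the final expression is itself a useful sanity check, and once the frame computation is aligned with the musical isomorphism the extra term should fall out naturally, completing the argument in both boundary-condition cases.
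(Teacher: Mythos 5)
Your overall skeleton (the tangential Gauss formula of \cite{CC25}, an expansion in the scaling parameter, boundary conditions imposed at orders $\eps^0$ and $\eps^1$, then an intrinsic rewriting) is the same as the paper's, but the logical route you propose cannot deliver Theorem \ref{thmd}. You treat the hypotheses of Theorem \ref{thmd} as those of Theorem \ref{thm1} plus two extra constraints, and plan to start from \eqref{o1} and delete the term $2(c^3_{13})^2U_0^1E_1$ once $\dv v=0$ and $\dv U_0=0$ are imposed. However, Theorem \ref{thmd} does not assume that $v$ is tangential to the rescaled ellipsoids, and that tangency ($v^3\equiv 0$, hence $N(v^3)=0$) is exactly what the derivation behind \eqref{o1} uses; under the two divergence-free hypotheses one has instead $N(v^3)=v^1c^3_{13}$ on $E$, which is \eqref{divfree}. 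The two assumptions are generically incompatible: imposing both forces $c^3_{13}U_0^1=0$ on $E$, which annihilates precisely the terms that distinguish \eqref{o1} from \eqref{o3}, so ``inserting'' the divergence constraints into the Theorem \ref{thm1} computation cannot produce \eqref{o3} for general $U_0$. The paper goes in the opposite direction: it derives the master formula \eqref{full_divfree} under the divergence-free hypotheses, proves \eqref{o3} and \eqref{o4} first, using the expansion \eqref{ppo} that retains the $v^\rho_\alpha\partial_\rho$ components (which your ansatz drops), and only afterwards obtains \eqref{o1}, \eqref{o2} from the observation that \eqref{full} and \eqref{full_divfree} differ solely in the sign of $(c^3_{13})^2U_0^1E_1$. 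Note also that $E_3=N=\frac{\lambda}{a}\partial_\rho+\frac{a}{\lambda}g^{\rho\phi}\partial_\phi$ is not the scaling direction; transporting the $U_\alpha$ along $E_3$ as you describe would set up the normal-direction expansion of Section \ref{sectHodge}, which produces different operators with no $c^3_{13}$ terms at all.

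The mechanism you invoke is also not the one at work. The term $2(c^3_{13})^2U_0^1E_1$ is a separate summand of \eqref{o1}, not a piece ``inside $-2\dv\Def U_0$'', and it is not generated by a nonzero three-dimensional divergence, nor does it simply disappear under the divergence hypotheses: the entire difference is localized in the double normal derivative $-(\tn_N\tn_N v)^T$, whose general expression contains $-2N(v^3)c^3_{13}E_1+v^1(c^3_{13})^2E_1$; with $N(v^3)=0$ this equals $+v^1(c^3_{13})^2E_1$ (giving \eqref{useg}), while with $N(v^3)=v^1c^3_{13}$ it becomes $-v^1(c^3_{13})^2E_1$ (giving \eqref{use?}) --- a sign flip dictated by \eqref{divfree}, not a removal. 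Finally, your plan stops short of the step that actually yields the stated intrinsic operators: after using the boundary condition at orders $\eps^0$ and $\eps^1$ to eliminate $U_1$ and $U_2$, one still needs the identities of Lemma \ref{Key1}, relating the resulting zeroth-order coefficients to $\Ric U_0$ and $\nabla_{U_0}(c^3_{13}E_1)$, together with \eqref{BW}, to land on $-2\dv\Def U_0+[c^3_{13}E_1,U_0]$ and on \eqref{o4}; the $U_0^1E^1$ versus $U_0^1E_1$ issue you raise is immaterial in the orthonormal frame, where $E^1=E_1^\flat$.
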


\begin{remark}
The assumptions on the vector field $v$ in Theorem \ref{thm1} are motivated by both the rigorous works and the form of the asymptotic expansion that was taken in \cite{Miura}.  In the rigorous results, the average operator is obtained by working only with the tangential piece of the 3D vector field.  Similarly, in \cite{Miura} the expansion is obtained by expanding using the tangential vector fields, with the exception of them being tangential to the intermediate surface obtained by the normal direction, to the ``distorted" ellipsoids.

On the other hand, the assumptions in Theorem \ref{thmd} are motivated by the setting of the Gauss formulas, where the main object of interest is the vector field on the surface $E$, which is then extended to a vector field in the neighborhood of $E$.  
\end{remark}
\begin{remark}  We remark that since the sphere can be viewed as an ellipsoid with $a=1$, we can see what happens with the above formulas for the case of the sphere. In the case of the sphere $c^3_{13}=0$.  Hence both of the formulas \eqref{o1} and \eqref{o3} for the Navier boundary condition reduce to the deformation Laplacian, and for the Hodge boundary condition, formulas \eqref{o2} and \eqref{o4} reduce to the Hodge Laplacian. 
 
\end{remark}
 
Whether or not the above formulas can be rigorously derived is to be determined, and will be pursued in future works.   

We next have the theorem for the Hodge boundary condition for the thin domain
around the ellipsoid $E$ obtained by considering the normal direction.

\begin{thm}\label{thm2}
Let $a,\eps>0$, and $E=\{{x^2+y^2+a^2z^2}=a^2\}$ be the ellipsoid embedded in $\R^3$. For each $p \in E$, let $N_{p}$ be the outward pointing normal unit vector to the ellipsoid $E$ at $p$, and 
\begin{equation} 
\tilde E_{\sigma} = \big \{ p + \sigma N_p : p \in E     \big \}.
\end{equation}
Set

\begin{equation}\label{tubular}
\mathcal{T}(\eps) =  \bigcup_{\sigma \in (-\eps , \eps )} \tilde E_{\sigma} .
\end{equation}
Given a vector field $v$ in the thin shell $\mathcal{T}(\eps)$  with boundary $\tilde E_{-\eps} \cup \tilde E_{\varepsilon}$, where $v$ is also assumed to be tangential to every $\tilde E_{\sigma}, -\eps\leq \sigma\leq \eps$, and satisfies the homogeneous Hodge condition on the boundary, the heuristic of the thin shell limit using the asymptotic expansion of $v$ in the normal direction produces the Hodge Laplacian on $E$ for the zeroth order term $U_0$ in the expansion.
\end{thm}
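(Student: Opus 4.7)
My plan is to work in Fermi (tubular) coordinates adapted to $E$. Let $(x^1,x^2)$ be local coordinates on $E$ and let $\sigma$ be the signed Euclidean distance to $E$ along the normal direction, so that each leaf $\tilde E_\sigma$ corresponds to $\{\sigma=\mathrm{const}\}$ and the ambient Euclidean metric takes the block-diagonal form
\[
g=g_{ij}(x,\sigma)\,\dd x^i\,\dd x^j+\dd\sigma^2,
\]
with $g_{ij}(x,0)$ the induced metric on $E$ and $\partial_\sigma g_{ij}|_{\sigma=0}$ proportional to $\two_{ij}$. Since $v$ is assumed tangent to every $\tilde E_\sigma$, I would expand
\[
v=v^i(x,\sigma)\,\partial_{x^i},\qquad v^i(x,\sigma)=\sum_{\alpha\geq 0}\sigma^\alpha U_\alpha^i(x),
\]
with each $U_\alpha$ viewed as a vector field on $E$.

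Next, I would apply the Gauss-type formula from \cite{CC25} to split the ambient Hodge Laplacian of $v^\flat$ into the intrinsic Hodge Laplacian on $\tilde E_\sigma$ plus extrinsic corrections involving $\partial_\sigma^2 v$, $\partial_\sigma v$, the shape operator, and the Gaussian curvature of $\tilde E_\sigma$. Because the Fermi metric is block-diagonal and the intrinsic geometry of $\tilde E_\sigma$ depends smoothly on $\sigma$, every object appearing in the formula (metric, Christoffel symbols, connection, shape operator, Gaussian curvature) admits a Taylor expansion about $\sigma=0$. Collecting the $\sigma^0$-coefficient produces an expression involving $U_0$, $U_1$, $U_2$ together with the extrinsic quantities of $E$.

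The homogeneous Hodge boundary condition on $\tilde E_{\pm\eps}$ becomes, for a tangential $1$-form in Fermi coordinates, the vanishing of $(\dd v^\flat)(\partial_\sigma,\partial_{x^i})$, i.e.\ essentially of $\partial_\sigma(v^\flat)_i$ at $\sigma=\pm\eps$. Expanding this condition in powers of $\eps$ ties $U_1$, and at the next order $U_2$, to $U_0$ through the second fundamental form of $E$. Substituting these identities into the $\sigma^0$ piece of the expansion of the ambient Hodge Laplacian of $v^\flat$ kills the extrinsic shape-operator corrections, leaving exactly $(\dd\dd^\ast+\dd^\ast\dd)_E U_0^\flat$ as claimed.

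The main obstacle is careful bookkeeping: one must propagate the $\sigma$-expansion of $g_{ij}$, $\sqrt{\det g}$, $\Gamma^k_{ij}$, and the Hodge star through both $\dd$ and $\dd^\ast$, and verify that the algebraic cancellation between the $U_1$-contribution forced by the boundary condition and the shape-operator terms arising from the Gauss formula is exact. Precisely this cancellation is what distinguishes the normal direction from the scaling direction treated in Theorem~\ref{thm1}, where the off-diagonal metric components between the tangent and scaling directions break the analogous cancellation and produce the extra Lie derivative term appearing in \eqref{o2}.
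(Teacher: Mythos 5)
Your plan follows essentially the same route as the paper: expand $v$ in powers of the signed distance $\sigma$ in the block-diagonal normal (Fermi) coordinates of Section \ref{setupd}, decompose the ambient Laplacian with the Gauss-type formula of \cite{CC25}, observe that for tangential $v$ the Hodge condition reduces to $\partial_\sigma (v^\flat)_i=0$ so that order by order $U_1$ and $U_2$ are expressed through the shape operator acting on $U_0$, and substitute to cancel the extrinsic terms --- which is exactly the computation of Section \ref{sectHodge}, where $U_1=sU_0$, $U_2=s^2U_0$ and the Weitzenb\"ock-type identity \eqref{BW} then yields the Hodge Laplacian. The only differences are cosmetic bookkeeping choices (you freeze the coordinate components of the $U_\alpha$ instead of extending by parallel transport, and you assert rather than carry out the final cancellation), not a different method.
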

\begin{remark}
This is the heuristic version for the ellipsoid of what was obtained rigorously for the sphere \cite{TemamZiane}.  The rigorous and the heuristic approach both produce the Hodge Laplacian.  We also note that the heuristic for the normal direction does not produce different formulas depending on the assumptions on $v$ regarding being always tangential and/or divergence free.  The theorem is stated for the simplicity for the simpler case of $v$ being tangential to all the $\tilde E_\sigma$, which again also resembles the heuristic set up in \cite{Miura}.
\end{remark}
\begin{remark}
  We also consider the Navier boundary condition and obtain the deformation Laplacian.  This is done in Section \ref{comp}, and is consistent with what was obtained in \cite{Miura}.
\end{remark}

The last result is the geometric formulation of the Navier and Hodge boundary conditions.  
\begin{thm}\label{thmb}
Let $M^n$ be a submanifold embedded in $\tilde M^{n+1}$, and $v$ a vector field on $\tilde M^{n+1}$ that is also tangential to $M$.  If $N$ is the choice of normal to $M$, then the perfect Navier slip boundary condition on $M$ is given by the tangential part of the Lie derivative of the velocity vector field $v$ with respect to $N$ set to zero.

In addition, if $n=2$, and $\tilde M^3=\R^3$, then the Hodge condition $\curl v\times N=0 $ for $v$ tangential to $M$ is equivalent to the (pullback to $M$) of the Lie derivative of the associated $1$-form $v^\flat$ with respect to $N$ set to zero.
\end{thm}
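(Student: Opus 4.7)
My plan is to treat the two claims separately. I extend $N$ smoothly to a unit vector field on a tubular neighborhood of $M$ in $\tilde M$, which makes $\mathcal L_N v = [N, v]$ a well-defined vector field near $M$ that can be computed pointwise using the ambient Levi-Civita connection $\nabla$. For the Navier part, rewrite the perfect Navier slip condition as $(\Def v)(N, X) = 0$ for every tangent vector field $X$ on $M$; via $\Def = \tfrac12 \mathcal L_v g$ this expands to
\[
2(\Def v)(N, X) = \langle \nabla_N v, X\rangle + \langle \nabla_X v, N\rangle.
\]
Differentiating the identity $\langle v, N\rangle \equiv 0$ along $X$ tangent to $M$ yields $\langle \nabla_X v, N\rangle = -\langle v, \nabla_X N\rangle$, so the Navier condition reduces to $\langle \nabla_N v, X\rangle = \langle v, \nabla_X N\rangle$ for every tangent $X$. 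On the other hand, using $[N,v] = \nabla_N v - \nabla_v N$ together with the fact that $\nabla_v N$ is automatically tangent to $M$ (since $\langle \nabla_v N, N\rangle = 0$ from $\lvert N\rvert \equiv 1$), the vanishing of the tangential part of $[N, v]$ is equivalent to $\langle \nabla_N v, X\rangle = \langle \nabla_v N, X\rangle$ for every tangent $X$. These two conditions coincide precisely when $\langle v, \nabla_X N\rangle = \langle \nabla_v N, X\rangle$, which is the self-adjointness of the Weingarten map $X \mapsto -\nabla_X N$, i.e., the symmetry of the second fundamental form, a universal property of Riemannian submanifolds.

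For the Hodge part, Cartan's magic formula gives $\mathcal L_N v^\flat = d(\iota_N v^\flat) + \iota_N\, dv^\flat$. Tangentiality of $v$ along $M$ yields $\iota_N v^\flat = \langle v, N\rangle = 0$ on $M$, so the pullback of $d(\iota_N v^\flat)$ to $M$ vanishes and only the pullback of $\iota_N\, dv^\flat$ survives. In $\R^3$, the Hodge-star identification of $2$-forms with vector fields produces the pointwise identity $(dv^\flat)(N, Y) = \langle \curl v \times N, Y\rangle$ for any pair of vectors $N, Y$. Consequently, the pullback of $\mathcal L_N v^\flat$ to $M$ vanishes iff $\langle \curl v \times N, Y\rangle = 0$ for every $Y$ tangent to $M$; since $\curl v \times N$ is automatically perpendicular to $N$ and hence already tangent to $M$, this is equivalent to $\curl v \times N = 0$, which is the Hodge condition.

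The only genuinely non-trivial step is the Navier equivalence, where the mismatch between $\nabla_X N$ (produced by differentiating $\langle v, N\rangle$) and $\nabla_v N$ (produced by the Lie bracket expansion) must be reconciled through the symmetry of the second fundamental form; this is exactly what makes the geometric reformulation work in an arbitrary Riemannian ambient $\tilde M$ rather than only in a Euclidean one. The Hodge part, by contrast, is an essentially immediate consequence of Cartan's formula combined with the standard curl-to-$2$-form correspondence in $\R^3$, once the $d(\iota_N v^\flat)$ contribution has been eliminated via tangentiality and pullback.
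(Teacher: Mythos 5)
Your proof is correct and follows essentially the same route as the paper: the Navier part rests on differentiating $\langle v,N\rangle=0$ tangentially and reconciling the result with the bracket via the shape operator (the paper does this in a principal-curvature eigenframe, you do it frame-free by invoking symmetry of the Weingarten map, and your computation in fact yields the paper's pointwise identity $(TN)_{tan}=[N,v]_{tan}$, not just the equivalence of the vanishing conditions), while the Hodge part is Cartan's formula together with the identity $\iota_N\,\dd v^\flat=(\curl v\times N)^\flat$, exactly as in the paper.
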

\begin{remark}
    The above statements will be made more precise when we introduce more notation.
\end{remark}
\begin{remark}
    Theorem \ref{thmb} could also cover the case of the Navier slip boundary condition with a friction coefficient.  See Section \ref{sect3} for details.
\end{remark}
The article is organized as follows. Section \ref{prelim} introduces the necessary geometric tools, including coordinates on the ellipsoid, the shape operator, and helpful formulas, as well as the setup for power series expansions. Section \ref{sect3} provides a geometric formulation of the Navier and Hodge boundary conditions, interpreting them via Lie derivatives. Section \ref{sect4} contains the proofs of the main results for the thin shell limit using asymptotic expansions in the scaling direction, addressing both Navier and Hodge conditions. Section \ref{sectHodge} focuses on the expansion in the normal direction under the Hodge condition. Section \ref{comp} offers a comparison with earlier works and discusses the Navier condition in the normal direction. Finally, Appendices A and B justify the power series expansions and review some relationships between forms and vector fields.

\section{Preliminaries}\label{prelim}
We begin with reviewing the metric and coordinates on the ellipsoid as they were defined in \cite{CCY_22}.
\subsection{Coordinates on the ellipsoid and on $\R^3$: scaling direction}\label{setupc}
We work with an ellipsoid $E$ defined by 
\[
E=\{ x^2+y^2+a^2z^2=a^2\},
\]
where $a>0$. $E$ can be parametrized by the map
\[
\Phi:  (0,\infty)\times (0,\pi)\times (-\pi,\pi) \to \R^3
\]
given by
\be\label{parametrize}
\Phi(\rho, \phi, \theta)=(a\rho\sin \phi \cos\theta, a\rho \sin\phi \sin\theta, \rho\cos \phi),
\ee
when $\rho=1$.  The map $\Phi$ induces coordinates on $\R^3\setminus \{0\}.$  The coordinate vector fields are
\begin{align}
\partial_\rho&=(a\sin \phi \cos\theta, a\sin\phi \sin\theta, \cos \phi),\nonumber\\
\partial_\phi&=(a\rho\cos\phi \cos\theta, a\rho\cos\phi\sin\theta, -\rho\sin\phi),\label{origphi}\\
\partial_\theta&=(-a\rho\sin\phi \sin\theta, a\rho\sin\phi\cos\theta, 0)\label{origtheta},
\end{align}
and the Euclidean metric is 
\be
\left(\begin{array}{ccc}
   g_{\rho \rho}&g_{\rho\phi}&0\\
   g_{\phi\rho }&g_{\phi\phi}&0\\
 0&0&g_{\theta\theta}\\
   \end{array}\right),
\ee
where
\begin{align}
g_{\rho\rho}&=a^2 \sin^2\phi+\cos^2\phi, \quad g_{\rho \phi}=g_{\phi\rho}=(a^2-1)\rho \sin\phi\cos\phi,\\
g_{\phi \phi}&=\rho^2\lambda^2, \qquad\qquad\qquad g_{\theta\theta}=a^2\rho^2\sin^2\phi,\\
\lambda^2&=a^2\cos^2\phi+\sin^2\phi.
\end{align}

 The inverse metric is 
\be
\left(\begin{array}{ccc}
   g^{\rho \rho}&g^{\rho\phi}&0\\
   g^{\phi\rho }&g^{\phi\phi}&0\\
 0&0&g^{\theta\theta}\\
   \end{array}\right),
\ee
\begin{align}
g^{\rho\rho}&=\frac {\lambda^2}{a^2},
 \quad\qquad g^{\rho \phi}=g^{\phi\rho}=\frac{1-a^2}{a^2\rho}\sin\phi\cos\phi=-\frac{g_{\rho\phi}}{a^2\rho^2},\label{usenow}\\
g^{\phi \phi}&=\frac1{a^2\rho^2}(a^2 \sin^2\phi+\cos^2\phi), \quad g^{\theta\theta}=\frac1{a^2\rho^2\sin^2\phi}\nonumber.
\end{align}

The metric on the ellipsoid $E$ is given by
\be
\left(\begin{array}{cc}
   g_{\phi \phi}&0\\
   0&g_{\theta\theta}\\
   \end{array}\right),
\ee
where $g_{\phi\phi}, g_{\theta\theta}$ are as above, with $\rho=1$.

Then, if $u$ is a vector field on $E$, 
\[
u=u^\phi \partial_\phi+u^\theta \partial_\theta=u^1E_1+u^2E_2,
\]
with
\[
E_1=\frac {\partial_\phi}{\lambda},\quad E_2=\frac{\partial_\theta}{a\sin\phi},
\]
and
\[
u^1=\lambda u^\phi,\quad u^2=a\sin\phi u^\theta.
\]
Next, if $v$ is a vector field on $\R^3$, then in general
\[
v=v^\rho\partial_\rho+v^\phi \partial_\phi+v^\theta \partial_\theta=v^1E_1+v^2E_2+v^3E_3,
\]
with
\be\label{frame_comp}
v^1=\frac{v_\phi}{\rho\lambda}=\frac{g_{\phi\rho}v^\rho+g_{\phi\phi}v^\phi}{\rho\lambda},\quad v^2=\frac{v_\theta}{a\rho\sin\phi}=a\rho\sin\phi v^\theta,\quad v^3=\frac{a v^\rho}{\lambda},
\ee
and
\[
E_1=\frac {\partial_\phi}{\rho\lambda},\quad E_2=\frac{\partial_\theta}{a\rho\sin\phi},\quad E_3=N=\frac{\nabla \rho}{\ag}=\frac \lambda a\partial_\rho+\frac a\lambda g^{\rho\phi}\partial_\phi.
\]

In this frame, we also have
\begin{align}
\Gamma_{33}^1=-\Gamma_{31}^3=c_{13}^3=\frac{a^2g^{\rho\phi}}{\lambda^3},\label{c313}
\end{align}
where $c^3_{13}$ is the component function in
\[
[E_1, N]=c_{13}^\alpha E_\alpha.
\]
In general,
\[
[E_\alpha, E_\beta]=c^\gamma_{\alpha\beta}E_\gamma,
\]
and
\[
\Gamma_{\alpha\beta}^\gamma=\frac 12(c^\gamma_{\alpha\beta}-c^\beta_{\alpha\gamma}-c^\alpha_{\beta\gamma}),
\]
which gives
\be\label{flipit}
\Gamma_{\alpha\beta}^\gamma=-\Gamma_{\alpha\gamma}^\beta.
\ee
Another computation shows
\be
\tn_N E_1=c_{31}^3N,\quad \tn_N E_2=0,\quad\tn_NN=c^3_{13}E_1.\label{tnN}
% \tn_{E_1} N&=c_{13}^1E_1,\quad
%\tn_{E_2} N=c_{23}^2E_2,\label{Weq1}
\ee
And we can show that
\be\label{gammas0}
\Gamma_{11}^1= \Gamma_{11}^2= \Gamma_{12}^1=\Gamma_{12}^2=\Gamma^1_{21}=0, \quad \Gamma^2_{21}=c^2_{21}=\frac {\cot\phi} \lambda, \quad\mbox{on E}.
\ee
This results in
\be\label{helpful5}
\nabla_{E_1}X=E_1(X^j)E_j,\quad \nabla_{E_2}E_1=\frac {\cot\phi} \lambda E_2,
\ee
where we use the Einstein notation to sum with respect to $j$.  In general, Roman indices sum from $1$ to $n$, where $n$ is the dimension of the submanifold, and Greek indices sum from $1$ to $n+1$, the dimension of the ambient Euclidean space, with $E_{n+1}$ always denoting the normal vector field $N$.  The only exception is when we use infinite series, and in that case we sum to infinity (see Section \ref{powerseries}).  If there is a possibility of confusion, we use an explicit sum.
\subsection{Shape operator}
We review briefly some properties of the shape operator  \cite[Ch.8]{Lee_RG}.  

We use this  background in the context of the boundary of the thin-shell domain being viewed as an embedded submanifold in $\R^3$.  So for example, for 
\[
\Omega=\{ (\rho,\phi,\theta): 1<\rho<1+\eps\},
\]
\[
\partial \Omega=E\cup E_{1+\eps},\quad E_{1+\eps}=\{\rho=1+\eps\},
\]
we can let $M=E$ below or $M=E_{1+\eps}$.

Let $M^2\hookrightarrow \R^3$, and $\mathfrak X(M)$ denote the smooth vector fields on $M$.  Then if $X, Y \in \mathfrak X(M)$, $X, Y$ can be extended to be vector fields in a neighborhood of $M$ in $\R^3$. Then the Gauss
formula on $M$ reads
\be\label{gauss}
\tn_X Y=\nabla_X Y+\two(X,Y).
\ee
Here $\tn$ is the Euclidean connection on $\R^3$, $\nabla$ is the Levi-Civita connection on $M$, and $\two : \mathfrak X(M) \times \mathfrak X(M) \to \Gamma (NM)$ is the second fundamental form, which sends two vectors fields on $M$ to smooth sections of the normal bundle of $M$.  
  
Let $N$ be a choice of a unit normal, $N\in \Gamma(NM)$.  Then the scalar second fundamental form is given by
\be
h(X,Y)=\tilde g(N, \two(X,Y)).
\ee
The shape operator
\[
s:  \mathfrak X(M) \to  \mathfrak X(M),
\]
is obtained by raising the index of $h$, 
\be\label{shape_def}
\tilde g(s X, v)=h(X,v).
\ee
The eigenvalues of $s$, $\kappa_i$ are called the principal curvatures, and at each $p\in M$, we have an orthonormal basis $\{E_i\}$ of eigenvectors.  For example, for $M=E\hookrightarrow \R^3$,  the principal curvatures are
\be\label{kappas}
\kappa_1=-\frac a{\lambda^3}, \quad \kappa_2=-\frac 1{a\lambda},
\ee
and the corresponding eigenvectors are $E_1$ and $E_2$ as introduced above.

Finally, we write down the Weingarten equation for a hypersurface
\be\label{Weq}
sX=-\tilde \nabla_X N.
\ee

\subsection{Divergence free conditions}
We assume the vector field is tangential to the ellipsoid $E$.  Then, if $v=v^\alpha E_\alpha$, 
\be
v^3=0\ \mbox{at}\ p\in E.
\ee
We use this repeatedly in the computations below.

If the vector field $v$ is both divergence free on $\R^3$ and on $E$, then we can show
\be\label{divfree}
N(v^3)=v^1c^3_{13} \quad\mbox{at}\ \ p\in E.
\ee

\subsection{Coordinates on the ellipsoid and on $\R^3$: normal direction}\label{setupd}
For the purpose of showing Theorem \ref{thm2} and comparing the results of this paper to what was obtained in \cite{Miura}, it is helpful to use a coordinate system adapted to the expansion in the normal direction.  We set this up now.

We consider a tubular neighborhood $\mathcal T(\eps)$ around the ellipsoid $E=\{ x^2+y^2+a^2z^2=a^2\}$ obtained by going distance $\eps$ in the normal direction, $\pm N_p$, $p\in E$.

At each $p\in E$, $p=(x_0, y_0, z_0)$,
\be\label{Np}
N_p=\frac{(x_0, y_0, a^2z_0)}{\sqrt{x_0^2+y_0^2+a^4z_0^2}},
\ee
and we observe that if $p\in E$, using the parametrization \eqref{parametrize}, $$p=(a\sin\phi \cos \theta, a\sin\phi \sin\theta, \cos \phi),$$ then
\be\label{Np1}
x_0^2+y_0^2+a^4z_0^2=a^2\lambda^2.
\ee
It might be helpful here to rename the variables $(\phi, \theta)$ as $(\lp, \lt)$.  We then have that for $p\in E$, $(\phi, \theta)=(\lp, \lt)$, and \eqref{Np1} still holds with $\lambda=\lambda(\lp, \lt)$.

We parametrize $\mathcal T(\eps)$ by a map $X(\sigma, \lp, \lt)$ so that 
\[
X(0,\lp, \lt)=(a\sin\lp \cos \lt, a\sin\lp \sin\lt, \cos \lp)\in E,
\]
and for general $x\in \mathcal T(\eps)$, 
\[
x=p+\sigma N_p,\quad p\in E, \ \sigma \in(-\eps,\eps),
\]
so using \eqref{Np} and \eqref{Np1}, we can compute
\begin{align}
N_p&=\frac 1{\lambda}(\sin\lp \cos \lt, \sin\lp \sin\lt, a\cos \lp),\label{newNp}\\
x&=X(\sigma, \tilde\phi,\tilde\theta):=\left((1+\frac{\sigma}{a\lambda})a\sin\lp \cos\lt, (1+\frac{\sigma}{a\lambda})a\sin\lp \sin\lt, (1+\frac{a\sigma}{\lambda})\cos\lp \right).\nonumber
\end{align}
We then can compute
\begin{align*}
\partial_\sigma|_x&=\frac 1{\lambda}\left(\sin\lp \cos\lt,  \sin\lp \sin\lt,  a\cos\lp \right),\\
\partial_{\lp}|_x&=(1+\frac{a\sigma}{\lambda^3})\left(a\cos\lp \cos\lt,  a\cos\lp \sin\lt, - \sin\lp \right), \\
\partial_{\lt}|_x&= (1+\frac{\sigma}{a\lambda})\left(-a\sin\lp \sin\lt,  a\sin\lp \cos\lt, 0 \right).
\end{align*}
Comparing to \eqref{newNp} and \eqref{origphi}-\eqref{origtheta}, we observe that we can think of the above vector fields as
\begin{align}
\partial_\sigma|_x&=N_{\pi(x)}, \quad \pi(x)=\pi(p+\sigma N_p)=p,\label{Npt}\\
\partial_{\lp}|_x&=\partial_\phi|_{(1,\phi,\theta)}\cdot (1+\frac{\sigma a}{\lambda^3}),\label{newphi}\\
\partial_{\lt}|_x&= \partial_\theta|_{(1,\phi,\theta)}\cdot (1+\frac{\sigma}{a\lambda})\label{newtheta}.
\end{align}
This is helpful when computing the metric, and we arrive that in these coordinates, the Euclidean metric in $\mathcal T(\eps)$ can be written as
\[
g=\diag (1, g_{\lp\lp}, g_{\lt\lt})=\diag (1, (1+\frac{\sigma a}{\lambda^3})^2\lambda^2, (1+\frac{\sigma}{a\lambda})^2a^2\sin^2\lp).
\]

\subsection{Helpful formulas}\label{helpful}
For future reference, we record here some helpful formulas, which hold on the ellipsoid $E$. First, we can write
\[
(1-a^2)\sin^2\phi=\lambda^2-a^2,\quad (1-a^2)\cos^2\phi=1-\lambda^2.
\]
Then
\be\label{helpful1}
a^2(g^{\rho\phi})^2=\frac{\lambda^2-a^2-\lambda^4+a^2\lambda^2}{a^2}.
\ee
We also have
  \[
  \partial_\phi (a^2g^{\rho\phi})=1+a^2-2\lambda^2,
  \]
  so 
  \be\label{helpful2}
  E_1(a g^{\rho\phi})=\frac 1{\lambda}\partial_\phi(ag^{\rho\phi})=\frac 1{a\lambda}+\frac a\lambda-2\frac \lambda a,
  \ee
  and
  \be\label{helpful3}
  E_1(\lambda )=\frac{\partial_\phi}{\lambda}(\lambda)=\frac{a^2g^{\rho\phi}}{\lambda^2}.
  \ee
  Then by \eqref{helpful2} and \eqref{helpful3}
  \be\label{helpful4}
  E_1 (c^3_{13})=E_1(\frac{a^2\grp}{\lambda^3})=- \frac{2}{\lambda^4} (1 + a^2) + \frac{1}{\lambda^2} + \frac{3a^2}{\lambda^6}.
  \ee
  Finally,
  \be\label{c313squared}
  (c^3_{13})^2=\frac{a^4(\grp)^2}{\lambda^6}=\frac 1{\lambda^4}-\frac{a^2}{\lambda^6}-\frac 1{\lambda^2}+\frac{a^2}{\lambda^4}.
  \ee

\subsection{Power series expansion}\label{powerseries}
We are interested in expanding the vector field $v$ on $\R^3$ as a power series of the vector fields on the ellipsoid $E$.  We consider doing this in two ways: in terms of the distance $d$ from the surface and in terms of the scaling parameter $\rho$.  

The following is the expansion of $v$ in the powers of the scaling parameter $\rho$ with the vector fields on $E$ extended by parallel transport in the direction of the vector field $\partial_\rho$.  It is given by
\be\label{ppo1}
v=(\rho-1)^\alpha v^\rho_\alpha\partial_\rho+ (\rho-1)^\alpha \frac 1\rho U_\alpha, \quad U_\alpha=U^\phi_\alpha\partial_\phi+U^\theta_\alpha \partial_\theta,
\ee
where we sum over the repeated index $\alpha$, starting at $0$, and $U_\alpha$ depend only on $\phi$ and $\theta$. Such expansion can be rigorously justified for real analytic vector fields.  We refer the reader to the Appendix \ref{justify}.  In addition, if $v$ is assumed to be tangential to all the rescaled ellipsoids, then we can let $v^\rho_\alpha=0$ for all $\alpha$.

Similarly, for the expansion in the normal direction, we can write
\be\label{exp:sigma}
v=\sigma^\alpha U_\alpha,
\ee
where $\sigma$ is the (signed) distance from the ellipsoid, and each $U_\alpha$, initially a vector field on $E$, is extended by parallel transport in the normal direction. The justification for this expansion is also addressed in the Appendix
 \ref{justify}.

\section{Proof of Theorem \ref{thmb}: geometric formulation of boundary conditions}\label{sect3}

In this section we discuss the Navier and Hodge boundary conditions, and show that they can be formulated using the language of geometry.  
 
Throughout this we use the notation that for vector fields $X=X^\alpha E_\alpha$ and $v=v^\alpha E_\alpha$, the covariant derivative of $v$ in the direction of $X$ can be expressed as
\[
\tn_X v=X^\alpha\tn_{\alpha} v^\beta E_\beta,
\]
where 
\[
\tn_\alpha v^\beta=E_\alpha (v^\beta)+\Gamma_{\alpha\gamma}^\beta v^\gamma.
\]
Similarly,
\[
\tn_\alpha v_\beta=E_\alpha (v_\beta)-\Gamma_{\alpha\beta}^\gamma v_\gamma,
\]
and
\be\label{same}
\tn_\alpha v^\beta=\tn_\alpha v_\beta
\ee
by \eqref{flipit} and since in an ON frame $v^\beta=v_\beta$.

The (perfect) Navier slip boundary condition is the no penetration condition $u\cdot N=0$, which means the vector field is tangential at the boundary, and in addition, we have
\be\label{navier_c}
(TN)_{tan}=0,
\ee
where $T$ is a stress tensor
\[
T_{\al\bt}=2\Def_{\al\bt} (v)-p\delta_{\al\bt},
\]
and in \eqref{navier_c}, we think of $T$ acting on $N$ as a matrix, resulting in a vector, which we then project onto the tangent space to our manifold.

The Hodge condition is the no penetration condition together with
\[
\curl v\times N=0.
\]
Note, as written this condition is clear only for a 3D domain.

In \cite{MitreaMonniaux}, the authors showed the Navier condition can be related to the Hodge condition by establishing the following relation.
\be\label{mm1}
(TN)_{tan}=\curl v\times N+2sv,
\ee
where $s$ is the shape operator.  In addition, it was shown that 
\be\label{mm}
\curl v\times N=\iota_N \dd v^\flat,
\ee
where $\iota$ denotes the interior multiplication. This implies that the right hand side could be used as a generalization of the Hodge condition to other dimensions (see \cite{MitreaMonniaux} for details).  We note there are appropriate identifications made in \eqref{mm1} and \eqref{mm}.  For example, forms are canonically identified with vector fields. 

We now show that these conditions can be formulated using Lie derivatives.  This will also lead to another way we can derive the formula \eqref{mm1} and generalize the Hodge condition to other dimensions.
\subsection{Navier condition as a Lie derivative}
We can write $T$ as 
\[
T=2\Def(v)-pI,
\]
where $I$ is the identity matrix.  So by linearity, we have
\be\label{tndef}
(TN)_{tan}=2(\Def(v)N)_{\tan}.
\ee

\begin{prop}\label{NavierLie}  The perfect Navier slip boundary condition for a tangential vector field is equivalent to the tangential part of the Lie derivative of the velocity vector field or equivalently, the tangential part of the Lie bracket.  More precisely, let $M^n$ be a submanifold embedded in $\tilde M^{n+1}$.  If $N$ is the choice of normal to $M$, then 
\be\label{LieNavier}
 (TN)_{tan}=(\mathcal L_N v)_{tan}=[N,v]_{tan}.
\ee
\end{prop}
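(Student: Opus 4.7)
The plan is to reduce both sides of $(TN)_{tan}=(\mathcal L_N v)_{tan}=[N,v]_{tan}$ to the common expression $(\tn_N v)_{tan} + sv$. The second equality is the standard identity that the Lie derivative of one vector field along another equals their Lie bracket, so only the first equality requires substantive work. For the right-hand side, torsion-freeness of the Levi-Civita connection on $\tilde M^{n+1}$ gives $[N,v] = \tn_N v - \tn_v N$. Since $v$ is tangential to $M$, the Weingarten equation \eqref{Weq} applies to $\tn_v N$, giving $\tn_v N = -sv$, which is itself tangent to $M$. Hence
\[
[N,v]_{tan} = (\tn_N v)_{tan} + sv.
\]

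For the left-hand side, \eqref{tndef} reduces matters to $2(\Def(v)N)_{tan}$, which I would test against the tangent frame $E_1,\ldots,E_n$. Starting from the identity $2\Def(v) = \mathcal L_v \tilde g$ on $\tilde M^{n+1}$, or equivalently from the direct expansion $2\Def(v)(X,Y) = \tilde g(\tn_X v, Y) + \tilde g(X, \tn_Y v)$, I get
\[
2\Def(v)(N,E_i) = \tilde g(\tn_N v, E_i) + \tilde g(N, \tn_{E_i} v).
\]
The first term is precisely $(\tn_N v)^i_{tan}$. For the second, since $v$ is tangential to $M$ and $E_i$ is tangent to $M$, the scalar function $\tilde g(v,N)$ vanishes identically along $M$, so $E_i(\tilde g(v,N)) = 0$ on $M$. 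Metric compatibility together with the Weingarten equation then give
\[
\tilde g(N, \tn_{E_i} v) = -\tilde g(v, \tn_{E_i} N) = \tilde g(v, sE_i) = h(v, E_i) = (sv)^i,
\]
using \eqref{shape_def} and the symmetry of $h$ in the last step. Combining, $(TN)^i_{tan} = (sv)^i + (\tn_N v)^i_{tan}$, which matches the expression already obtained for $[N,v]_{tan}$.

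The only delicate step is the computation of the mixed term $\tilde g(N, \tn_{E_i} v)$. Tangentiality of $v$ is used precisely to force $E_i(\tilde g(v,N)) = 0$ on $M$, and it is essential that $E_i$ is tangent to $M$, since the analogous computation with $E_i$ replaced by $N$ would not vanish; once this is in place, the Weingarten equation cleanly produces the curvature correction $sv$. As a conceptual byproduct, this derivation clarifies \eqref{mm1} of \cite{MitreaMonniaux}: the $2sv$ discrepancy appearing there between $(TN)_{tan}$ and $\curl v\times N$ is exactly the contribution of the Weingarten substitution $\tn_v N = -sv$ that enters when one rewrites the symmetrized gradient $\Def(v)$ as a Lie-bracket expression.
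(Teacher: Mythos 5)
Your proposal is correct and follows essentially the same route as the paper's proof: both sides are reduced to $(\tn_N v)_{tan}+sv$ by combining the Weingarten equation for $\tn_v N$ with the key identity $\tilde g(N,\tn_{E_i}v)=\tilde g(v,sE_i)$ obtained by differentiating $\tilde g(v,N)=0$ along tangential directions. The only cosmetic difference is that you keep $sv$ abstract via the symmetry of $h$, whereas the paper works in a principal-curvature eigenframe and writes $(sv)^i=\kappa_i v^i$.
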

\begin{proof}
It holds for any vector fields $X, Y$ that 
\[
\mathcal L_X Y=[X,Y],
\]
so the second equality in \eqref{LieNavier} holds.  We now show the first one. 

By definition, for a vector field $v$,
\[
\Def(v)(X,Y)=\frac 12\{ g(\tn_Xv,Y)+g(\tn_Yv,X) \},
\]
where $\tn$ denotes the Levi-Civita connection in $\tilde M^{n+1}$. So in the ON frame $\{E_\alpha \}$
\be\label{def_cor}
\Def(v)_{\alpha\beta}=\frac{\tn_\alpha v_\beta+\tn_\beta v_\alpha}{2}.
\ee
 Now, if we think of $\Def v$ acting as a matrix, using \eqref{def_cor} and assuming it is written in an orthonormal frame, this produces
\be\label{matrix_action}
\Big(\frac{\tn_\alpha v_\beta+\tn_\beta v_\alpha}{2}\Big)N=\sum_{\alpha}\frac{\tn_\alpha v_{n+1}+\tn_{n+1} v_\alpha}{2}E_\alpha.
\ee
 
So if we only want the tangential component, then
\be\label{tang}
(TN)_{tan}= \sum_{i=1}^n\tn_i v_{n+1} E_i+\sum_{i=1}^n\tn_{n+1} v_iE_i,
\ee
where we used \eqref{tndef}.

We now show that this is equivalent to $[N, v]_{tan}$. Let $p\in M$ and assume we are evaluating below at $p$. We have
\be
[N, v]=\tn_N v-\tn_vN=\sum_\alpha\tn_{n+1}v^\alpha E_\alpha-\tn_v N.
\ee
Projecting gives
\be\label{liet}
[N, v]^T=\sum_{i=1}^n\tn_{n+1}v^i E_i+sv=\sum_{i=1}^n\tn_{n+1}v^i E_i+\sum_{i=1}^nv^i\kappa_i E_i,
\ee
where $s$ is the shape operator, and we now further assume that $\{E_i\}$ consists of the eigenvectors at $p$ of the shape operator.

Now
\[
\tg(v, N)=0
\]
for points on $M$.  Hence on $M$
\[
E_i g(v,N)=0,
\]
for any members of the ON frame of $M$, which also are (local) vector fields on $\tilde M^{n+1}$. Then by the properties of the connection $\tn$ on $\tilde M^{n+1}$,
\[
0=g(\tn_{E_i}v, N)+g(v, \tn_{E_i}N),
\]
or equivalently (note, $i$ is fixed, and we do not sum with respect to $i$)
\be\label{product}
g(\tn_{E_i}v, N)=-g(v, \tn_{E_i}N)=g(v, sE_i)=g(v, \kappa_i E_i)=v^i\kappa_i ,
\ee
but the left hand side is also equal to
\[
g(\tn_{E_i}v, N)=g(\tn_i v^\alpha E_\alpha, N)=\tn_i v^{n+1}.
\]
So by \eqref{product}
\[
\tn_i v^{n+1}=v^i\kappa_i.
\]
Plugging this into \eqref{liet} we obtain
\[
[N, v]^T=\sum_{i=1}^n\tn_{n+1}v^i E_i+\sum_{i=1}^n\tn_i v^{n+1} E_i,
\]
which is \eqref{tang} by \eqref{same}.
\end{proof}
The Navier slip boundary condition can also include the friction coefficient $\gamma\geq 0$, and then it would read as
\[
 (TN+\gamma v)_{tan}=0.
\]
 From Proposition \ref{NavierLie}, we can then write it as
 \[
 ([N,v]+\gamma v)_{tan}=0.
\]

\subsection{Hodge condition as a Lie derivative}
Here we would like to relate the Hodge condition
\[
\curl v\times N=0
\]
to the Lie derivative of the form $v^\flat$
\be\label{relate}
i^\ast_{M} (\mathcal L_N v^\flat)=0.
\ee
Note, in the case of $1$-forms, the Lie derivative will produce a $1$-form on the ambient manifold, so in order to obtain a $1$-form on the submanifold, we need to use the pullback by the inclusion map; the pullback here is the analog of the tangential projection of vector fields.

\begin{prop}\label{HodgeLie} Let $M$ be a surface embedded in $\R^3$, $i_M : M\hookrightarrow \R^3$ the inclusion map, and  $v$ a vector field on $\R^3$ such that $v\in \mathfrak X(M)$.  If $N$ is the choice of the unit normal to $M$, defined in a neighborhood in $\tilde M$, then 
\be\label{LieHodge}
i^\ast_{M} (\mathcal L_N v^{\flat_{\R^3}})=( \curl v\times N)^{\flat_M}.
\ee
\end{prop}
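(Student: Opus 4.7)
The plan is to use Cartan's magic formula to split the Lie derivative into two pieces and then identify each piece with a concrete object. Writing
\[
\mathcal L_N v^{\flat_{\R^3}} = \iota_N \dd v^{\flat_{\R^3}} + \dd(\iota_N v^{\flat_{\R^3}}),
\]
the second term is the exterior derivative of the scalar function $\iota_N v^{\flat_{\R^3}} = g_{\R^3}(v,N)$. Since $v \in \mathfrak X(M)$, this function vanishes identically on $M$, hence its pullback by $i_M$ vanishes on $M$; because $i_M^\ast$ commutes with $\dd$, the pullback of $\dd(\iota_N v^{\flat_{\R^3}})$ vanishes as well. This reduces the claim to showing $i_M^\ast(\iota_N \dd v^{\flat_{\R^3}}) = (\curl v \times N)^{\flat_M}$.

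Next I would use the standard identification in $\R^3$ between $\dd v^{\flat_{\R^3}}$ and the curl via the Hodge star, namely $\dd v^{\flat_{\R^3}} = \star (\curl v)^{\flat_{\R^3}}$, equivalently
\[
\dd v^{\flat_{\R^3}}(X,Y) = \curl v \cdot (X \times Y)
\]
for any $X,Y \in \mathfrak X(\R^3)$. This is a direct coordinate computation in Cartesian coordinates that is cleanest to verify once and invoke. Contracting with $N$ and applying the scalar triple product identity gives
\[
\iota_N \dd v^{\flat_{\R^3}}(X) = \curl v \cdot (N \times X) = (\curl v \times N) \cdot X,
\]
which holds for every $X \in \mathfrak X(\R^3)$.

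To finish, observe that $\curl v \times N$ is orthogonal to $N$, so it is tangent to $M$. Therefore, for $X \in T_pM$, the right hand side agrees with $g_M(\curl v \times N, X) = (\curl v \times N)^{\flat_M}(X)$, since the induced metric on $M$ is just the restriction of $g_{\R^3}$. Pulling back by $i_M$ and combining with the vanishing of the exact part yields the claimed identity. The only mildly delicate point is the bookkeeping of musical isomorphisms: the flat on the left uses the ambient metric on $\R^3$ while the flat on the right uses the induced metric on $M$, and I would make sure to stress that this identification is legitimate precisely because $\curl v \times N$ is tangent to $M$, so the two raise/lower operations agree when restricted to tangent vectors.
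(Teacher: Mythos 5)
Your proposal is correct and follows essentially the same route as the paper's proof: Cartan's formula, the vanishing of the pullback of the exact term $\dd\,\iota_N v^\flat$ because $g(v,N)$ vanishes identically on $M$, and the identification $i_M^\ast(\iota_N \dd v^\flat)=(\curl v\times N)^{\flat_M}$. The only difference is how that last identity is checked — you use $\dd v^\flat(X,Y)=\curl v\cdot(X\times Y)$ together with the scalar triple product, while the paper runs a moving-frame computation with the Hodge star — and both verifications are sound.
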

\begin{proof}
For simplicity, we drop the subscripts on $\flat$ operators in the proof.  Let $v$ be a vector field on $\R^3$ that is also tangential to $M$. 
Formula \eqref{LieHodge} is a consequence of the Cartan's formula
\[
\mathcal L_N v^\flat=\iota_N \dd v^\flat+\dd\iota_N v^\flat.
\]
First, since $ v\in \mathfrak X(M)$, $0=g(v,N)=\iota_N v^\flat$ on $M$. We claim this implies $i^\ast_M \dd\iota_N v^\flat=0$.  To see that, since $\dd\iota_N v^\flat$ is a $1$-form on $\R^3$, it can be written locally around each $p\in M$ as
\[
 \dd\iota_N v^\flat=\lambda_{\alpha} E^\alpha,
\]
for some smooth functions $\lambda_{\alpha}$.  Then
\[
(i^\ast \dd\iota_N v^\flat)(p)=\lambda_{i}(p) E^i|_p, \quad p\in M,
\]
and
\[
\lambda_i(p)=(\dd\iota_N v^\flat)(E_i|_p)=E_i|_pg_q(v,N)=0,\quad p, q\in M.
\]

% let $X\in \R^3$, and consider
%\[
%(\dd\iota_N u^\flat)(X)=X g(u,N)=
%\]
%
%
%and 
%  
Therefore \eqref{LieHodge} will follow from 
\be\label{eqnop}
\iota_N \dd v^\flat=(\curl v \times N)^\flat,
\ee
which is \eqref{mm} shown in \cite{MitreaMonniaux}.  We show this below for completeness using a different method of computation.
First using (see e.g., \cite{Jost})
\[
\dd v^\flat=E^\al\wedge \tn_{E_\alpha}v^\flat,
\]
we compute
\begin{align*}
\iota_N \dd v^\flat&=\iota_N (E^\al\wedge \tn_{E_\alpha}v^\flat)\\
&=E^\al(N)\tn_{E_\al}v^\flat-(\tn_{E_\al}v^\flat)(N)E^\al\\
&=\tn_Nv^\flat-\tn_{\al}v_3 E^\al\\
&=(\tn_3 v_i-\tn_{i}v_3) E^i,
\end{align*}
where we used that $\tn_N v^\flat=\tn_3 v_\alpha E^\alpha$ to go to the last line.
 
Next, using that for $1$-forms $\alpha, \beta$, $(\alpha ^\sharp \times \beta^\sharp)^\flat=\star(\alpha \wedge \beta)$, and that $\curl v=(\ast \dd v^\flat)^\sharp$ (see  \eqref{crossandwedge} and \eqref{curlandd}), we have
\[
(\curl v \times N)^\flat=\left((\star \dd v^\flat)^\sharp\times N\right)^\flat
=\star(\star \dd v^\flat \wedge N^\flat).
\]
To compute the last term, we use $N^\flat=E^3$ and
\[
\star (E^1\wedge E^2)=E^3,\quad\star (E^1\wedge E^3)=-E^2,\quad \star (E^2\wedge E^3)=E^1.
\]
This gives
\[
\star(\star \dd v^\flat \wedge N^\flat)=(\tn_3 v_1-\tn_1 v_3)E^1+(\tn_3 v_2-\tn_2 v_3)E^2,
\]
which is  exactly $\iota_N \dd v^\flat$ above.

\end{proof}
Theorem \ref{thmb} now follows from Propositions \ref{NavierLie} and \ref{HodgeLie}.  We finish with the following remark.
\begin{remark}
In \cite{CC25} the first two authors obtained a relationship between the Lie derivative of a form and the Lie derivative of a vector field.  In the specific case of the Lie derivative in the direction of the normal, it becomes (see \cite[Lemma 2.8]{CC25})
\be\label{NH}
((\mathcal L_{N}v^\flat)^\sharp)^T=(\mathcal L_N v)^T-2sv.
\ee
Then from \eqref{LieNavier} and \eqref{LieHodge}, this is another way we can obtain formula \eqref{mm1}.
 
\end{remark}
\section{Proof of Theorems \ref{thm1} and \ref{thmd}: expansion in the scaling direction}\label{sect4}
In this section we prove Theorems \eqref{thm1} and \eqref{thmd}.  We first start with a tangential part of the Gauss formula for a Laplacian of a vector field on $\R^3$.  The goal is to write everything in terms of the ON frame on the ellipsoid $E$ plus some terms that involve derivatives in the scaling direction that points away from $E$.
 \subsection{Viscosity operator: deriving an intrinsic formula}
In this section we work with a formula from \cite{CC25}.  We rely on the computations from Section \ref{prelim}.  

The starting point is the following theorem.

\begin{thm}\cite{CC25}\label{gausst} Let $n\geq 2$, and let $M$ denote an embedded hypersurface in $\R^{n+1}$, $M\hookrightarrow \R^{n+1}$, $\dim M=n$, and let $v$ be a vector field on $M$, $p \in M$, and $v$ be an extension of $v$ to a neighborhood of $p$ in $\R^{n+1}$, still denoted by $v$. If $T$ denotes the projection onto the tangent space $T_pM$,  then 

\be\label{project}
(-\dv \tilde \nabla v)^T=\nabla^\ast  \nabla v-\Ric v+nH[N,v]^T-(\tilde \nabla_N\tilde\nabla_N v)^T+\nabla_{\tn_N N}v,
\ee
where 
\begin{itemize}
\item $\tn$ denotes the Euclidean connection on $\R^{n+1}$; Hence $-\dv\tilde \nabla v$ is the standard Laplacian on $\R^{n+1}$;
\item $\nabla$ denotes the Levi-Civita connection on $M$,
\item $\Ric v=(\Ric (v,\cdot))^\sharp$, where $\Ric(\cdot, \cdot)$ is the Ricci tensor on $M$,
\item $H$ is the mean curvature of $M$,
\item $N$ is the choice of the unit normal to $M$, defined in a neighborhood of $p\in M$,
\item $[\cdot, \cdot]$ is the Lie bracket,
\end{itemize}
and it is assumed all the terms are evaluated at $p \in M$.
\end{thm}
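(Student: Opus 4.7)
The plan is to write the Euclidean connection Laplacian in a local orthonormal frame adapted to $M$, split the sum into tangential and normal directions, and systematically reduce extrinsic quantities to intrinsic ones via the Gauss and Weingarten equations.

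Choose an ON frame $\{E_1,\dots,E_n,E_{n+1}=N\}$ near $p\in M$ with $\{E_i\}_{i=1}^n$ tangent to $M$ and, at $p$, consisting of principal directions so that $sE_i=\kappa_iE_i$. Writing $-\dv\tilde\nabla v=-\sum_{\alpha=1}^{n+1}\bigl(\tilde\nabla_{E_\alpha}\tilde\nabla_{E_\alpha}v-\tilde\nabla_{\tilde\nabla_{E_\alpha}E_\alpha}v\bigr)$, the normal direction $\alpha=n+1$ contributes exactly $-\tilde\nabla_N\tilde\nabla_N v+\tilde\nabla_{\tilde\nabla_N N}v$; the tangential piece of this is $-(\tilde\nabla_N\tilde\nabla_N v)^T+\nabla_{\tilde\nabla_N N}v$ since $\tilde\nabla_N N$ is tangent to $M$ (so the Gauss formula \eqref{gauss} applied to $\tilde\nabla_{\tilde\nabla_N N}v$ projects cleanly). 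These are precisely the last two terms on the right in \eqref{project}.

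For the tangential directions, I would apply the Gauss formula twice to each $\tilde\nabla_{E_i}\tilde\nabla_{E_i}v$. Since $v\in\mathfrak X(M)$, the first application gives $\tilde\nabla_{E_i}v=\nabla_{E_i}v+h(E_i,v)N$. Differentiating again and using $\tilde\nabla_{E_i}N=-sE_i$ from \eqref{Weq}, the tangential part becomes
\begin{equation}
(\tilde\nabla_{E_i}\tilde\nabla_{E_i}v)^T=\nabla_{E_i}\nabla_{E_i}v-h(E_i,v)\,sE_i.
\end{equation}
A similar Gauss split applied to $\tilde\nabla_{\tilde\nabla_{E_i}E_i}v$ yields $(\tilde\nabla_{\tilde\nabla_{E_i}E_i}v)^T=\nabla_{\nabla_{E_i}E_i}v+h(E_i,E_i)(\tilde\nabla_N v)^T$. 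Summing in $i$ and using $\sum_i h(E_i,E_i)=nH$ together with the definition of the Bochner Laplacian $\nabla^\ast\nabla v$ gives
\begin{equation}
\Bigl(-\sum_{i=1}^n\bigl(\tilde\nabla_{E_i}\tilde\nabla_{E_i}v-\tilde\nabla_{\tilde\nabla_{E_i}E_i}v\bigr)\Bigr)^T=\nabla^\ast\nabla v+\sum_i h(E_i,v)\,sE_i+nH\,(\tilde\nabla_N v)^T.
\end{equation}

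It remains to match this with the stated right-hand side, which requires the identity
\begin{equation}
\sum_i h(E_i,v)\,sE_i+nH\,(\tilde\nabla_N v)^T=-\Ric v+nH\,[N,v]^T.
\end{equation}
In the principal frame $\sum_i h(E_i,v)\,sE_i=\sum_i\kappa_i^2 v^iE_i=s^2v$, while the Weingarten equation gives $[N,v]^T=(\tilde\nabla_N v)^T-\tilde\nabla_v N=(\tilde\nabla_N v)^T+sv$. So the needed identity reduces to $\Ric v=nH\,sv-s^2v$, which is exactly the contracted Gauss equation for a hypersurface (derived from $R(X,Y,Z,W)=h(X,W)h(Y,Z)-h(X,Z)h(Y,W)$ by tracing over the first and last slots).

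The main technical obstacle, and really the only nontrivial step, is the double application of the Gauss formula together with correctly tracking which pieces are tangential versus normal; once that bookkeeping is done, the identification of $\sum_i h(E_i,v)sE_i=s^2v$ and its rewriting via the Gauss equation is purely algebraic. The formula is tensorial in $v$ at $p$, so although $v$ must be extended off $M$ for $\tilde\nabla_N v$ and $\tilde\nabla_N\tilde\nabla_N v$ to make sense, the tangential terms on the right depend only on $v|_M$, as expected.
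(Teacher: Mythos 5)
Your argument is correct, and it is essentially self-contained: splitting $-\dv\tilde\nabla v=-\sum_{\alpha}\bigl(\tilde\nabla_{E_\alpha}\tilde\nabla_{E_\alpha}v-\tilde\nabla_{\tilde\nabla_{E_\alpha}E_\alpha}v\bigr)$ into the normal slot (which yields exactly $-(\tilde\nabla_N\tilde\nabla_N v)^T+\nabla_{\tn_N N}v$) and the tangential slots, applying the Gauss formula \eqref{gauss} twice together with the Weingarten equation \eqref{Weq}, and then invoking $[N,v]^T=(\tilde\nabla_N v)^T+sv$ and the contracted Gauss equation $\Ric v=nH\,sv-s^2v$ reproduces \eqref{project} term by term, with conventions matching the paper's ($nH=\operatorname{tr}s$, $sX=-\tilde\nabla_X N$, and for $n=2$ the identity $nH\,s-s^2=\kappa_1\kappa_2\,\mathrm{Id}$, consistent with the paper's use of $\Ric v=\kappa_1\kappa_2 v$ on $E$). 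Note that this paper does not contain a proof to compare against: Theorem \ref{gausst} is imported from \cite{CC25} and only applied here, so your derivation stands as an independent verification rather than an alternative to an in-paper argument. Two small remarks: the identification $\sum_i h(E_i,v)\,sE_i=s^2v$ is frame-independent (by self-adjointness of $s$), so the restriction to a principal frame at $p$ is convenient but not needed; and it is worth being explicit that each individual term such as $(\tilde\nabla_N\tilde\nabla_N v)^T$, $[N,v]^T$, and $\nabla_{\tn_N N}v$ depends on the chosen extensions of $v$ and $N$ off $M$, while the combinations appearing on both sides agree for any such extensions, which is exactly how the tensoriality of $\tilde\nabla^2_{X,Y}v$ enters your bookkeeping.
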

We specialize the formula to $M=E$, and $\R^3$.  We start with the Bochner Laplacian the Ricci operator on $E$.

\subsubsection{Bochner Laplacian and the Ricci operator}
Here, when written for the j'th component in the ON frame $\{E_i\}$ introduced in Section \eqref{prelim}, we have
  \begin{align*}
  \nabla ^i \nabla_i v^j&=\nabla_i \nabla_i v^j,\\
  (\Ric v)^j=\kappa_1\kappa_2 v^j&=\frac 1{\lambda^4}v^j,
  \end{align*}
  by \eqref{kappas}.
  
 \subsubsection{Lie bracket}
 Here, since $n=2$
 \[
 2H=\kappa_1+\kappa_2,
 \]
 and on the ellipsoid $E$ 
 \be\label{liebracket_gen}
 \bs
 [N,v]^T=(\tn_Nv-\tn_vN)^T&=(N(v^\alpha)E_\alpha+v^\alpha\tn_NE_\alpha-\tn_vN)^T\\
 &=(N(v^\alpha)E_\alpha+v^i\tn_NE_i)^T+sv=\sum_i(\kappa_i v^i +N(v^i))E_i.
 \end{split}
  \ee
 \subsubsection{Double $N$ covariant derivative}
 We start by writing $N(h)$ in terms of $E_1(h)$, for some function $h$.  More precisely
 \be\label{Nintrinsic}
 N(h)=\frac{\lambda}a\partial_\rho h+\frac a\lambda g^{\rho\phi}\partial_\phi h=\frac{\lambda}a\partial_\rho h+a\rho g^{\rho\phi}E_1 h.
 \ee
Next, a computation using divergence free condition, both on $\R^3$ and on $E$, shows
 \begin{align}
 -(\tn_N\tn_N  v)^T&=-N(N(v^i))E_i-v^1(c^3_{13})^2E_1.\label{use?}
 \end{align}
 In general, without assuming the two divergence free conditions, we have
 \begin{align}
 -(\tn_N\tn_N  v)^T&=-N(N(v^i))E_i-2N(v^3)c^3_{13}E_1+v^1(c^3_{13})^2E_1\nonumber\\
 &=-N(N(v^i))E_i+v^1(c^3_{13})^2E_1\label{useg},
\end{align}
if the vector field is assumed to satisfy $v^3=0$ identically. 

Next,
 \begin{align*}
 N(N(v^i))&=\frac{\lambda}a\partial_\rho (Nv^i)+a\rho g^{\rho\phi}E_1 (Nv^i)\\
&=\frac{\lambda^2}{a^2}\partial_\rho^2 v^i + \lambda \rho g^{\rho\phi}\partial_\rho E_1 v^i\\
&\qquad+a\rho g^{\rho\phi}\left(E_1( \frac{\lambda}a)\partial_\rho v^i+\frac \lambda a E_1\partial_\rho v^i+E_1(a\rho g^{\rho\phi})E_1 v^i+a\rho g^{\rho\phi}E_1E_1 v^i\right).
  \end{align*}
  
  Above we need  
  \[
  \partial_\rho E_1 v^i=\partial_\rho \frac{\partial_\phi}{\rho \lambda}v^i=-\frac{1}{\rho^2\lambda}\partial_\phi v^i+\frac 1{\rho\lambda}\partial_\phi \partial_\rho v^i=-E_1 v^i+E_1 \partial_\rho v^i,
  \]
  where we use that we are evaluating the action of $E_1$ on $E$, so we can set $\rho=1$ in $E_1$.
  We plug in and evaluate on $E$ to obtain
 \[
 N(N(v^i))=\frac{\lambda^2}{a^2}\partial_\rho^2 v^i+ag^{\rho\phi}\left(E_1( \frac{\lambda}a)\partial_\rho v^i+2\frac \lambda a E_1\partial_\rho v^i-\frac\lambda a E_1 v^i+E_1(a g^{\rho\phi})E_1 v^i+a g^{\rho\phi}E_1E_1 v^i\right).
 \]
 \subsubsection{Last term}
 Here we arrive at
 \be
\nabla_{\tn_NN}v=c^3_{13} E_1(v^i)E_i.
 \ee
 \subsubsection{The full operator}
 Using \eqref{useg} we combine the above computations to obtain for a vector field that is tangential to the rescaled ellipsoids  (we use the lower index on the left to simplify the notation)
\begin{align*}
(-\dv\tilde \nabla v)^T_j&= -\nabla_i \nabla_i v^j -\frac 1{\lambda^4}v^j+(\kappa_1+\kappa_2)(\kappa_j v^j +N(v^j))\\
 &\qquad- \frac{\lambda^2}{a^2}\partial_\rho^2 v^j-ag^{\rho\phi}\left(E_1( \frac{\lambda}a)\partial_\rho v^j+2\frac \lambda a E_1\partial_\rho v^j-\frac\lambda a E_1 v^j+E_1(a g^{\rho\phi})E_1 v^j+a g^{\rho\phi}E_1E_1 v^j\right)\\
 &\qquad+v^1(c^3_{13})^2\delta^{j1}+c^3_{13} E_1(v^j),
 \end{align*}
 and using \eqref{use?} for a vector field $v$ that is divergence free both on $\R^3$ and $E$
 \begin{align*}
(-\dv\tilde \nabla v)^T_j&= -\nabla_i \nabla_i v^j -\frac 1{\lambda^4}v^j+(\kappa_1+\kappa_2)(\kappa_j v^j +N(v^j))\\
 &\qquad- \frac{\lambda^2}{a^2}\partial_\rho^2 v^j-ag^{\rho\phi}\left(E_1( \frac{\lambda}a)\partial_\rho v^j+2\frac \lambda a E_1\partial_\rho v^j-\frac\lambda a E_1 v^j+E_1(a g^{\rho\phi})E_1 v^j+a g^{\rho\phi}E_1E_1 v^j\right)\\
 &\qquad-v^1(c^3_{13})^2\delta^{j1}+c^3_{13} E_1(v^j).
 \end{align*}
 We note that the above formulas differ only by the sign of the first term in the last line.
 
  We now aim to simplify this as much as possible. At this point, we are not assuming any particular boundary condition to immediately cancel some terms.   We organize by the order of the operator, keeping the first line intact for now since it isolates the Ricci term and the Lie bracket term (which will be convenient to cancel later).  This gives 
\be\label{full}
 \begin{split}
(-\dv\tilde \nabla v)^T_j&= -\nabla_i \nabla_i v^j -\frac 1{\lambda^4}v^j+(\kappa_1+\kappa_2)(\kappa_j v^j +N(v^j))-a^2(g^{\rho\phi})^2E_1E_1 v^j\\
&\qquad+\left(c^3_{13}+\lambda \grp -ag^{\rho\phi}E_1(a g^{\rho\phi})\right)E_1v^j +v^1(c^3_{13})^2\delta^{j1}\\
 &\qquad- \frac{\lambda^2}{a^2}\partial_\rho^2 v^j-ag^{\rho\phi}\left( \frac{ag^{\rho\phi}}{\lambda^2}\partial_\rho v^j +2\frac \lambda a E_1\partial_\rho v^j \right),
 \end{split}
 \ee
 for a vector field that is tangential to the rescaled ellipsoids, and the following for a vector field that is both divergence free on $\R^3$ and $E$
 \be\label{full_divfree}
 \begin{split}
(-\dv\tilde \nabla v)^T_j&= -\nabla_i \nabla_i v^j -\frac 1{\lambda^4}v^j+(\kappa_1+\kappa_2)(\kappa_j v^j +N(v^j))-a^2(g^{\rho\phi})^2E_1E_1 v^j\\
&\qquad+\left(c^3_{13}+\lambda \grp -ag^{\rho\phi}E_1(a g^{\rho\phi})\right)E_1v^j -v^1(c^3_{13})^2\delta^{j1}\\
 &\qquad- \frac{\lambda^2}{a^2}\partial_\rho^2 v^j-ag^{\rho\phi}\left( \frac{ag^{\rho\phi}}{\lambda^2}\partial_\rho v^j +2\frac \lambda a E_1\partial_\rho v^j \right),
 \end{split}
 \ee
where we used \eqref{helpful2} and \eqref{helpful3}.  The last lines collects all the terms that depend on the behavior of $v$ away from the ellipsoid.

\subsection{Navier condition}
We are now ready to use the expansion in the scaling direction, and begin with the proof of formula \eqref{o3} for the Navier boundary condition.  Formula \eqref{o1} will then follow from it.   

Using \eqref{liebracket_gen} and \eqref{Nintrinsic}, the Navier boundary condition is equivalent to
\be
(\frac{\lambda}a\partial_\rho v^i+\frac a\lambda g^{\rho\phi}\partial_\phi v^i)E_i+sv=0,\label{nav:bc}
\ee
where $s$ is the appropriate shape operator.  Since we are rescaling in the direction of $\partial_\rho$, on the outer boundary we have
\be\label{lieb_c}
\frac 1{1+\eps}\kappa_i v^i+\frac \lambda a \partial_\rho v^i+\frac{ag^{\rho\phi}_1}{1+\eps}\frac{\partial_\phi}{\lambda} v^i=0,
\ee
where $\kappa_i$ are the principal curvatures on $E$, and where we use the notation
\[
g^{\rho\phi}_1=g^{\rho\phi}\quad \mbox{at} \ \rho=1.
\]

Now, following the discussion in Section \ref{powerseries}, we suppose
\be\label{ppo}
v=(\rho-1)^\alpha(v^\rho_\alpha\partial_\rho+ \frac 1\rho U_\alpha), \quad U_\alpha=U^\phi_\alpha\partial_\phi+U^\theta_\alpha \partial_\theta,
\ee
 
where we sum over the repeated index $\alpha$, starting at $0$, and $U_\alpha$ depend only on $\phi$ and $\theta$. 

Then using \eqref{frame_comp} the component functions of $v$ in the ON frame are
\[
v^1=\frac{g_{\phi\rho}(\rho-1)^\alpha v_\alpha^\rho+\rho^2\lambda^2 (\rho-1)^\alpha \frac 1\rho U_\alpha^\phi}{\rho\lambda},\quad v^2=a\sin\phi (\rho-1)^\alpha U_\alpha^\theta,
\]
or in a shorthand
\[
v^1=\frac{g^1_{\phi\rho}(\rho-1)^\alpha v_\alpha^\rho}{\lambda}+(\rho-1)^\alpha U_\alpha^1,\quad v^2=(\rho-1)^\alpha U_\alpha^2, 
\]
where $g_{\rho\phi}^1=g_{\rho\phi} \ \mbox{at} \ \rho=1$ and
where, for example, by $U_\alpha^2$, we mean the 2nd component of  $U_\alpha$, when written in an ON frame on the ellipsoid $E$.

For $\rho=1+\eps, j=1,2$ we have, using \eqref{usenow}
\begin{align*}
v^j&=\eps^\alpha(U_\alpha^j-\frac{a^2\grp_1}{\lambda}v_\alpha^\rho\delta^{j1}),\\
\partial_\rho v^j&=\alpha \eps^{\alpha-1}(U_\alpha^j-\frac{a^2g_1^{\phi\rho}}{\lambda}v^\rho_\alpha\delta^{j1}).
\end{align*}

We plug this into \eqref{lieb_c}, multiply by $1+\eps$, and gather the zeroth and the first power to obtain
\begin{align}
    &\eps^0:\quad \kappa_j  U_0^j+\frac \lambda a(U_1^j-\frac{a^2g^{\rho\phi}}{\lambda}v^\rho_1\delta^{j1})+ ag^{\rho\phi}E_1 U^j_0=0,\label{eps0p}\\
    &\eps^1:\quad (\kappa_j+\frac \lambda a)(U_1^j-\frac{a^2g^{\rho\phi}}{\lambda}v^\rho_1\delta^{j1} )+2\frac \lambda a (U_2^j-\frac{a^2g^{\rho\phi}}{\lambda}v^\rho_2\delta^{j1})\nonumber\\
    &\qquad\qquad+ ag^{\rho\phi}E_1(U_1^j-\frac{a^2g^{\rho\phi}}{\lambda}v^\rho_1\delta^{j1} )=0.\label{eps1p}
    \end{align}
To simplify notation, above we assume that the terms we see are evaluated at $\rho=1$. So for example, we drop the notation $g^{\rho\phi}_1$, and simply write $g^{\rho\phi}$. 

Next, on $E$ we have
\begin{align*}
 v^j&=U^j_0,\\
\partial_\rho v^j&=U_1^j-\frac{a^2g^{\rho\phi}}{\lambda}v^\rho_1\delta^{j1},\\
     \partial_\rho^2 v^j&=2(U_2^j-\frac{a^2g^{\rho\phi}}{\lambda}v^\rho_2\delta^{j1}).
     \end{align*}
We use this to evaluate \eqref{full_divfree} on $E$ together with the Navier boundary condition.  This gives

 \be\label{full_divfreeEN}
 \begin{split}
(-\dv\tilde \nabla v)^T_j&= -\nabla_i \nabla_i U^j_0 -\frac 1{\lambda^4}U^j_0-a^2(g^{\rho\phi})^2E_1E_1 U^j_0\\
&\qquad+\left(c^3_{13}+\lambda \grp -ag^{\rho\phi}E_1(a g^{\rho\phi})\right)E_1U^j_0 -U^1_0(c^3_{13})^2\delta^{j1}\\
 &\qquad- 2\frac{\lambda^2}{a^2}
 (U_2^j-\frac{a^2g^{\rho\phi}}{\lambda}v^\rho_2\delta^{j1})\\
&\qquad -ag^{\rho\phi}\left( \frac{ag^{\rho\phi}}{\lambda^2}(U_1^j-\frac{a^2g^{\rho\phi}}{\lambda}v^\rho_1\delta^{j1}) +2\frac \lambda a E_1(U_1^j-\frac{a^2g^{\rho\phi}}{\lambda}v^\rho_1\delta^{j1})\right).
 \end{split}
 \ee
  The goal is to use equations \eqref{eps0p}-\eqref{eps1p} to rewrite the above equation only in terms of $U^j_0$.  To that end, 
from \eqref{eps1p} it follows that
\[
- 2\frac{\lambda^2}{a^2}
 (U_2^j-\frac{a^2g^{\rho\phi}}{\lambda}v^\rho_2\delta^{j1})=\frac\lambda a(\kappa_j+\frac \lambda a)(U_1^j-\frac{a^2g^{\rho\phi}}{\lambda}v^\rho_1\delta^{j1} )+\lambda g^{\rho\phi}E_1(U_1^j-\frac{a^2g^{\rho\phi}}{\lambda}v^\rho_1\delta^{j1} ).
\]
We plug this into \eqref{full_divfreeEN} and observe we can combine it with the very last term in \eqref{full_divfreeEN}.  This results in
  \be
 \begin{split}
(-\dv\tilde \nabla v)^T_j&= -\nabla_i \nabla_i U^j_0 -\frac 1{\lambda^4}U^j_0-a^2(g^{\rho\phi})^2E_1E_1 U^j_0\\
&\qquad+\left(c^3_{13}+\lambda \grp -ag^{\rho\phi}E_1(a g^{\rho\phi})\right)E_1U^j_0 -U^1_0(c^3_{13})^2\delta^{j1}\\
 &\qquad +\frac\lambda a(\kappa_j+\frac \lambda a)(U_1^j-\frac{a^2g^{\rho\phi}}{\lambda}v^\rho_1\delta^{j1} )\\
&\qquad -ag^{\rho\phi}\left( \frac{ag^{\rho\phi}}{\lambda^2}(U_1^j-\frac{a^2g^{\rho\phi}}{\lambda}v^\rho_1\delta^{j1}) +\frac \lambda a E_1(U_1^j-\frac{a^2g^{\rho\phi}}{\lambda}v^\rho_1\delta^{j1})\right).
 \end{split}
 \ee
Next, from \eqref{eps0p}
\begin{align*}
 \frac \lambda a(U_1^j-\frac{a^2g^{\rho\phi}}{\lambda}v^\rho_1\delta^{j1})&=-(  \kappa_j  U_0^j+ ag^{\rho\phi}E_1 U^j_0),\\
 E_1(U_1^j-\frac{a^2g^{\rho\phi}}{\lambda}v^\rho_1\delta^{j1})&=-E_1(\frac a\lambda  \kappa_j  U_0^j+ \frac{a^2g^{\rho\phi}}{\lambda}E_1 U^j_0).
\end{align*}
We input this in the above formula to arrive at
 \be\nonumber
 \begin{split}
(-\dv\tilde \nabla v)^T_j&= -\nabla_i \nabla_i U^j_0 -\frac 1{\lambda^4}U^j_0-a^2(g^{\rho\phi})^2E_1E_1 U^j_0\\
&\qquad+\left(c^3_{13}+\lambda \grp -ag^{\rho\phi}E_1(a g^{\rho\phi})\right)E_1U^j_0 -U^1_0(c^3_{13})^2\delta^{j1}\\
 &\qquad - (\kappa_j+\frac \lambda a)(  \kappa_j  U_0^j+ ag^{\rho\phi}E_1 U^j_0)\\
&\qquad +ag^{\rho\phi}\left( \frac{ag^{\rho\phi}}{\lambda^2} (\frac a\lambda  \kappa_j  U_0^j+ \frac{a^2g^{\rho\phi}}{\lambda}E_1 U^j_0) +\frac \lambda a E_1(\frac a\lambda  \kappa_j  U_0^j+ \frac{a^2g^{\rho\phi}}{\lambda}E_1 U^j_0)\right).
 \end{split}
 \ee
From here on we organize by the order of derivatives.  We observe that the double $E_1$ derivatives cancel one another.  Now we discuss the first order term.  

 \subsubsection{First order term}
We gather the coefficients of $E_1U^j_0$.  They are
\begin{align*}
&c^3_{13}+\lambda \grp -ag^{\rho\phi}E_1(a g^{\rho\phi})- (\kappa_j+\frac \lambda a)ag^{\rho\phi} +ag^{\rho\phi} \frac{ag^{\rho\phi}}{\lambda^2}  \frac{a^2g^{\rho\phi}}{\lambda}+a\grp \kappa_j  +\lambda\grp E_1 (\frac{a^2g^{\rho\phi}}{\lambda}) \\
&=c^3_{13} +ag^{\rho\phi} \frac{ag^{\rho\phi}}{\lambda^2}  \frac{a^2g^{\rho\phi}}{\lambda}  -\frac{a^2(g^{\rho\phi})^2}{\lambda} E_1 (\lambda) ,
\end{align*}
where we use the product rule and make some other obvious cancellations.  From \eqref{helpful3} we see we are left only with $c^3_{13}$, which comes from
\[
\nabla_{\tn_NN}v=c^3_{13}\nabla_{E_1}U_0=c^3_{13}E_1U^j_0E_j.
\]
Recall $c_{13}^3$ is the normal component of $[E_1,N]$. It can also be interpreted as $\frac{E_1(|\nabla \rho|)}{|\nabla \rho|}$.
\subsubsection{Zero order term}
The coefficients of $U^j_0$ are
\[
-\frac 1{\lambda^4}-(c^3_{13})^2\delta^{j1} - (\kappa_j+\frac \lambda a)  \kappa_j +ag^{\rho\phi}\frac{a^2g^{\rho\phi}}{\lambda^3}\kappa_j+\lambda g^{\rho\phi} E_1(\frac a\lambda  \kappa_j ).
\]
We have
\[
\kappa_1=-\frac{a}{\lambda^3},\quad \kappa_2=-\frac1{a\lambda}, 
\]
so
\be
E_1(\frac a\lambda \kappa_1)=4\frac{a^4}{\lambda^7}g^{\rho\phi},\quad E_1(\frac a\lambda \kappa_2)=2\frac{a^2}{\lambda^5}g^{\rho\phi}.
\ee
Using
\[
c^3_{13}=\frac{a^2\grp}{\lambda^3},
\]
for $j=1$, we have
\[
-\frac 1{\lambda^4}+2(c^3_{13})^2-(\kappa_1)^2-\frac \lambda a \kappa_1,
\]
which by \eqref{helpful1} reduces to
\[
\frac 1{\lambda^4}-3(\kappa_1)^2-\frac{1}{\lambda^2}+2\frac{a^2}{\lambda^4}.
\]
  
For $j=2$, we have the following terms
\[
-\frac 1{\lambda^4}-(\kappa_2)^2+\frac 1{a^2}+\frac{a^2(g^{\rho\phi})^2}{\lambda^4},
\]
which using \eqref{helpful1} again simplify to
  \[
 \frac{1}{\lambda^2}-\frac{2}{\lambda^4},
 \]
This can be interpreted as
\[
-\frac{\kappa_2}{\ag}-2K_E,
\]
or more intrinsically as
\be\label{0u2}
\sqrt{K_E}-2K_E.
\ee
\subsubsection{Summary}\label{summary}
 We summarize the result here.  Under the expansion \eqref{ppo} and the homogenous Navier boundary condition, the heuristic of the thin limit produces the following operator.
 \begin{equation}\nonumber
\begin{split}
  &\nabla^\ast \nabla U_0+
  c^3_{13}E_1U_0^jE_j \\
 &\quad+(K_E-\sqrt{K_E}-3(\kappa_1)^2-2\frac{\kappa_1}{\abs{\nabla \rho}})U^1_0 E_1\\
 &\quad+(\sqrt{K_E}-2K_E)U^2_0E_2.
\end{split}
\end{equation}
Moreover, using \eqref{helpful5}, we can write it as
 \begin{equation}\label{summaryN}
\begin{split}
  &\nabla^\ast \nabla U_0+c^3_{13}\nabla_{E_1}U_0\\
 &\quad+(K_E-\sqrt{K_E}-3(\kappa_1)^2-2\frac{\kappa_1}{\abs{\nabla \rho}})U^1_0 E_1\\
 &\quad+(\sqrt{K_E}-2K_E)U^2_0E_2.
\end{split}
\end{equation}
To recover a fully intrinsic formula we use the following lemma.

\begin{lemma}\label{Key1}
Let $w \in \mathfrak X(E)$ and let $\{E_1 , E_2 \}$ be the orthonormal moving frame on $E$ as given by $E_1 = \frac{\partial_{\phi}}{\lambda}$ and $E_2 = \frac{\partial_{\theta}}{a \sin \phi}$. Then, the following identities hold on $E$

\begin{equation}\label{KeyandImportant}
\Big ( g(w, \nabla_{\cdot } (c_{13}^3 E_1 )) \Big )^{\sharp} = \nabla_w \big ( c_{13}^3 E_1 \big ) ,
\end{equation}
\begin{equation}\label{observation1}
\bs
-  \nabla_w \big ( c_{13}^3 E_1 \big )
&= \Ric w + \Big ( K_E - \sqrt{K_E} - 3 \big ( \kappa_1 \big )^2 -2 \frac{\kappa_1}{\big | \nabla \rho \big | } \Big ) w^1E_1\\
 &\quad+  \big ( \sqrt{K_E} - 2 K_E \big )w^2 E_2.
 \end{split}
\end{equation}
 
\end{lemma}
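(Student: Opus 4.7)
The plan is to handle the two identities separately; the conceptual key to \eqref{KeyandImportant} is the observation that $c_{13}^3 E_1$ is a gradient on $E$. Since the Gaussian curvature $K_E = \kappa_1\kappa_2 = 1/\lambda^4$ depends only on $\phi$, and in the orthonormal frame $\{E_1,E_2\}$ the intrinsic gradient of a function $f$ is $\nabla^E f = E_1(f)E_1 + E_2(f)E_2$, the identity $c_{13}^3 = -\tfrac{1}{4}E_1(\log K_E)$ yields
\begin{equation}
c_{13}^3 E_1 = -\tfrac{1}{4}\,\nabla^E(\log K_E).
\end{equation}
Consequently $(c_{13}^3 E_1)^\flat$ is exact, so the $(0,2)$-tensor $(Y,Z)\mapsto g(\nabla_Y(c_{13}^3 E_1),Z)$ coincides with $-\tfrac{1}{4}\,\mathrm{Hess}(\log K_E)$ and is symmetric. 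Unwinding the definition of $\sharp$, this symmetry gives $g(w,\nabla_Y(c_{13}^3 E_1)) = g(\nabla_w(c_{13}^3 E_1),Y)$ for every $Y$, which is precisely \eqref{KeyandImportant}.

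For \eqref{observation1} I would carry out a direct componentwise computation. Writing $w = w^1 E_1 + w^2 E_2$ and applying the Leibniz rule, $\nabla_w(c_{13}^3 E_1) = w(c_{13}^3)\,E_1 + c_{13}^3\,\nabla_w E_1$. Since $c_{13}^3$ depends only on $\phi$, one has $w(c_{13}^3) = w^1 E_1(c_{13}^3)$, and \eqref{helpful5} gives $\nabla_{E_1}E_1 = 0$ and $\nabla_{E_2}E_1 = \tfrac{\cot\phi}{\lambda}E_2$, so $\nabla_w E_1 = w^2 \tfrac{\cot\phi}{\lambda} E_2$. Since the surface is two-dimensional, $\Ric w = K_E w$, and the claim reduces to the two scalar identities
\begin{equation}
-E_1(c_{13}^3) = 2K_E - \sqrt{K_E} - 3\kappa_1^2 - 2\kappa_1/|\nabla\rho|, \qquad -c_{13}^3\,\tfrac{\cot\phi}{\lambda} = \sqrt{K_E} - K_E.
\end{equation}
For the first I would substitute \eqref{helpful4} on the left and on the right use $K_E = 1/\lambda^4$, $\sqrt{K_E} = 1/\lambda^2$, $\kappa_1^2 = a^2/\lambda^6$ from \eqref{kappas}, together with $|\nabla\rho| = \lambda/a$, which follows from $g^{\rho\rho} = \lambda^2/a^2$ in \eqref{usenow}. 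For the second I would use $c_{13}^3 = (1-a^2)\sin\phi\cos\phi/\lambda^3$ (from \eqref{c313} and \eqref{usenow}) and the identity $(1-a^2)\cos^2\phi = 1-\lambda^2$ from Section \ref{helpful} to rewrite the left-hand side as $\tfrac{1}{\lambda^2} - \tfrac{1}{\lambda^4}$, matching $\sqrt{K_E} - K_E$.

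The main obstacle is the conceptual step in part (1): once one recognizes that $c_{13}^3 E_1$ is a gradient, \eqref{KeyandImportant} is automatic via symmetry of the Hessian, whereas verifying symmetry of $\nabla(c_{13}^3 E_1)$ directly through Christoffel-symbol manipulations would be substantially more involved. Part (2) is then purely a bookkeeping check using the trigonometric identities collected in Section \ref{helpful}.
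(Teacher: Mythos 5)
Your proposal is correct, and the two identities split naturally into a part where you differ from the paper and a part where you coincide with it. For \eqref{KeyandImportant} the paper simply computes both sides in the frame $\{E_1,E_2\}$: using $\nabla_{E_1}E_1=0$, $E_2(c_{13}^3)=0$ and $g(E_1,\nabla_{E_2}E_1)=0$ it reduces each side to $w^1E_1(c_{13}^3)E_1+w^2c_{13}^3\nabla_{E_2}E_1$. Your route is genuinely different: you note $K_E=1/\lambda^4$ depends only on $\phi$, so $c_{13}^3E_1=-\tfrac14\nabla^E(\log K_E)$ (consistent with \eqref{c313} and \eqref{helpful3}, and stated in Theorem \ref{thm1}), hence $g(\nabla_\cdot(c_{13}^3E_1),\cdot)$ is $-\tfrac14$ times the Hessian of $\log K_E$ and is symmetric, which is exactly \eqref{KeyandImportant}. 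This explains \emph{why} the identity holds and would apply verbatim to any gradient field on any Riemannian manifold, whereas the paper's three-line frame computation is more self-contained and reuses precisely the formulas ($E_1(c_{13}^3)$, $\nabla_{E_2}E_1$) needed for the second identity anyway. For \eqref{observation1} your argument is essentially the paper's: expand $\nabla_w(c_{13}^3E_1)=w^1E_1(c_{13}^3)E_1+w^2c_{13}^3\tfrac{\cot\phi}{\lambda}E_2$, use $\Ric w=K_Ew$ in dimension two, and check the two scalar identities with \eqref{helpful4}, $(1-a^2)\cos^2\phi=1-\lambda^2$, $\kappa_1=-a/\lambda^3$ and $|\nabla\rho|=\lambda/a$; I verified these reductions and they match the paper's computation \eqref{calculus3}.
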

\begin{proof} First, for an arbitrary vector field $w = w^1 E_1 + w^2 E_2$, by using $\nabla_{E_1}E_1 = 0$ and $E_2 \big ( c_{13}^3 \big ) = 0$, we get by a direct computation
\begin{equation}\label{morefavorable}
\nabla_w \big ( c_{13}^3 E_1 \big ) = w^1 E_1 \big ( c_{13}^3 \big ) E_1 + w^2 c_{13}^3 \nabla_{E_2} E_1 .
\end{equation}
On the other hand, by using in addition that \eqref{helpful5} gives $g( E_1 , \nabla_{E_2} E_1) = 0$, it follows
\begin{equation}\label{lessfavorable}
\begin{split}
\Big ( g_E (w, \nabla_{\cdot } (c_{13}^3 E_1 )) \Big )^{\sharp} & = \sum_{i=1}^2 g(w, \nabla_{E_i } (c_{13}^3 E_1 )) E_i \\
&=\sum_{i=1}^2 g(w, E_i  (c_{13}^3) E_1+c_{13}^3\nabla_{E_i}E_1) E_i \\
%& = w^1E_1 \big ( c_{13}^3 \big ) E_1 + g(w , c_{13}^3 \nabla_{E_2} E_1  ) E_2 \\
& = w^1 E_1 \big ( c_{13}^3 \big ) E_1 + w^2 c_{13}^3  g( E_2 , \nabla_{E_2} E_1  ) E_2 \\
& = w^1 E_1 \big ( c_{13}^3 \big ) E_1 + w^2 c_{13}^3 \nabla_{E_2} E_1 ,
\end{split}
\end{equation}
where the last equal sign follows since $g( E_1 , \nabla_{E_2} E_1) = 0$ means that $\nabla_{E_2} E_1 = g( E_2 , \nabla_{E_2} E_1  ) E_2 $.  This shows \eqref{KeyandImportant}.

Next, we start by recalling \eqref{helpful4}
\begin{equation}\label{calculus1}
\begin{split}
E_1 \big ( c_{13}^3 \big |_E \big ) = - \frac{2}{\lambda^4} (1 + a^2) + \frac{1}{\lambda^2} + \frac{3a^2}{\lambda^6} .
\end{split}
\end{equation}
as well as \eqref{helpful5}
\begin{equation}\label{appearbefore}
  \nabla_{E_2}E_1 =  \frac{\cos \phi}{\lambda\sin \phi} E_2 .
\end{equation}
Then from \eqref{morefavorable}, \eqref{c313} and \eqref{helpful1} we get

\be\label{calculus3}
-\nabla_w \big ( c_{13}^3 E_1 \big ) = w^1 \Big ( \frac{2}{\lambda^4} (1+a^2) - \frac{1}{\lambda^2} - \frac{3a^2}{\lambda^6} \Big ) E_1 - w^2 \frac{(1-\lambda^2)}{\lambda^4} E_2.
\ee

By taking $\big ( \kappa_1 \big )^2 = \frac{a^2}{\lambda^6}$, $K_E = \frac{1}{\lambda^4}$, and $\frac{\kappa_1}{\big | \nabla \rho \big |}
= - \frac{a^2}{\lambda^4}$ into our consideration, \eqref{calculus3} immediately leads to
\begin{align*}
 -\nabla_w \big ( c_{13}^3 E_1 \big ) 
= & \big ( K_E w^1 E_1 + K_E w^2 E_2 \big ) +w^1 \Big ( K_E - \sqrt{K_E} - 3 \big ( \kappa_1 \big )^2 -2 \frac{\kappa_1}{\big | \nabla \rho \big | } \Big ) E_1 \\
&\quad+ w^2 \big ( \sqrt{K_E} - 2 K_E \big ) E_2 \\
= & \Ric w + w^1 \Big ( K_E - \sqrt{K_E} - 3 \big ( \kappa_1 \big )^2 -2 \frac{\kappa_1}{\big | \nabla \rho \big | } \Big ) E_1 + w^2 \big ( \sqrt{K_E} - 2 K_E \big ) E_2 ,
\end{align*}
which is exactly identity \eqref{observation1} .
\end{proof}
In light of Lemma \ref{Key1}, our thin shell limit result can be interpreted as follows.
\begin{equation*}
\begin{split}
& \nabla^* \nabla U_0 + c_{13}^3 \nabla_{E_1} U_0
+  \Big ( K_E - \sqrt{K_E} - 3 \big ( \kappa_1 \big )^2 -2 \frac{\kappa_1}{\big | \nabla \rho \big | } \Big ) U_0^1 E_1 +  \big ( \sqrt{K_E} - 2 K_E \big ) U_0^2 E_2 \\
&= \nabla^* \nabla U_0 +c_{13}^3 \nabla_{E_1} U_0 - \Ric U_0 -\nabla_{U_0} \big ( c_{13}^3 E_1 \big ) \\
&=-2\dv \Def U_0+[c_{13}^3E_1, U_0],
\end{split}
\end{equation*}
where we use \eqref{BW}.  The last line is the formula \eqref{o3} in Theorem \ref{thmd}.  

From here, since the formulas \eqref{full_divfree} and \eqref{full} differ only by the sign of the term $(c^3_{13})^2U^1_0E_1$, and the asymptotic expansion of the vector field $v$ in Theorem \ref{thm1} is a special case of the asymptotic expansion we use in the proof of Theorem \ref{thmd}, we can obtain the formula \eqref{o1} by adding $2(c^3_{13})^2U^1_0E_1$ to the formula \eqref{o3}.

\subsection{Hodge condition}
From Proposition \ref{NavierLie}, Proposition \ref{HodgeLie}, and \eqref{NH}, we have that the homogeneous Hodge boundary condition is equivalent to
\[
[N,v]^T=2sv.
\]
By definition of the Lie bracket and the Weingarten equation, this is equivalent to
\[
(\tn_N v)^T-sv=0.
\]
Hence, using \eqref{liebracket_gen} and \eqref{Nintrinsic}, the homogeneous Hodge boundary condition is equivalent to
\[
(\frac{\lambda}a\partial_\rho v^i+\frac a\lambda g^{\rho\phi}\partial_\phi v^i)E_i-sv=0.
\]
We note that this only differs from the homogeneous Navier condition by the sign of the shape operator term.

Then, similarly as before, on the outer boundary we have
\be\label{lieb_c_hodge}
\frac \lambda a \partial_\rho v^i+\frac{ag^{\rho\phi}_1}{1+\eps}\frac{\partial_\phi}{\lambda} v^i-\frac 1{1+\eps}\kappa_i v^i=0,
\ee
 and
 \begin{align*}
v^j&=\eps^\alpha(U_\alpha^j-\frac{a^2\grp_1}{\lambda}v_\alpha^\rho\delta^{j1}),\\
\partial_\rho v^j&=\alpha \eps^{\alpha-1}(U_\alpha^j-\frac{a^2g_1^{\phi\rho}}{\lambda}v^\rho_\alpha\delta^{j1}).
\end{align*}

Using the same set-up as in the previous section, including assuming that the terms we see are evaluated at $\rho=1$, it follows 
\begin{align}
    &\eps^0:\quad -\kappa_j  U_0^j+\frac \lambda a(U_1^j-\frac{a^2g^{\rho\phi}}{\lambda}v^\rho_1\delta^{j1})+ ag^{\rho\phi}E_1 U^j_0=0,\label{eps0p:hodge}\\
    &\eps^1:\quad (\frac \lambda a-\kappa_j)(U_1^j-\frac{a^2g^{\rho\phi}}{\lambda}v^\rho_1\delta^{j1} )+2\frac \lambda a (U_2^j-\frac{a^2g^{\rho\phi}}{\lambda}v^\rho_2\delta^{j1})\nonumber\\
    &\qquad\qquad+ ag^{\rho\phi}E_1(U_1^j-\frac{a^2g^{\rho\phi}}{\lambda}v^\rho_1\delta^{j1} )=0.\label{eps1p:hodge}
    \end{align}

Next, from \eqref{full_divfree} on $E$ together with the Hodge boundary condition, we obtain

 \be\label{full_divfreeEH}
 \begin{split}
(-\dv\tilde \nabla v)^T_j&= -\nabla_i \nabla_i U^j_0 -\frac 1{\lambda^4}U^j_0+2(\kappa_1+\kappa_2)\kappa_iU_0^i-a^2(g^{\rho\phi})^2E_1E_1 U^j_0\\
&\qquad+\left(c^3_{13}+\lambda \grp -ag^{\rho\phi}E_1(a g^{\rho\phi})\right)E_1U^j_0 -U^1_0(c^3_{13})^2\delta^{j1}\\
 &\qquad- 2\frac{\lambda^2}{a^2}
 (U_2^j-\frac{a^2g^{\rho\phi}}{\lambda}v^\rho_2\delta^{j1})\\
&\qquad -ag^{\rho\phi}\left( \frac{ag^{\rho\phi}}{\lambda^2}(U_1^j-\frac{a^2g^{\rho\phi}}{\lambda}v^\rho_1\delta^{j1}) +2\frac \lambda a E_1(U_1^j-\frac{a^2g^{\rho\phi}}{\lambda}v^\rho_1\delta^{j1})\right).
 \end{split}
 \ee
    
Then, following similar calculations as for the Navier boundary condition we can arrive at the following operator: 
 \begin{equation}\label{summaryH}
\begin{split}
  &\nabla^\ast \nabla U_0+c^3_{13}\nabla_{E_1}U_0\\
 &\quad+(\frac{3}{\lambda^2}-\frac{3+4a^2}{\lambda^4}+\frac{5a^2}{\lambda^6})U^1_0 E_1\\
 &\quad+(\frac{2}{\lambda^4}-\frac{1}{\lambda^2})U^2_0E_2,
\end{split}
\end{equation}
which can be interpreted as 
 \begin{equation}\label{summaryH1}
\begin{split}
  &\nabla^\ast \nabla U_0+c^3_{13}\nabla_{E_1}U_0\\
 &\quad+(3\sqrt{K_E}-3K_E+5(\kappa_1)^2+4\frac{\kappa_1}{|\nabla\rho|}
 )U^1_0 E_1\\
 &\quad+(2K_E-\sqrt{K_E})U^2_0E_2.
\end{split}
\end{equation}
Then from Lemma \ref{Key1} and \eqref{c313squared} we obtain

\begin{equation*}
\begin{split}
& \nabla^* \nabla U_0 + c_{13}^3 \nabla_{E_1} U_0
+  \Big ( 3 \sqrt{K_E} -3K_E +5 \big ( \kappa_1 \big )^2 +4 \frac{\kappa_1}{\big | \nabla \rho \big | } \Big ) U_0^1 E_1 +  \big (  2 K_E -\sqrt{K_E}\big ) U_0^2 E_2 \\
=& \nabla^* \nabla U_0 + c_{13}^3 \nabla_{E_1} U_0 +  \Big ( g( U_0 , \nabla_{\cdot} \big ( c_{13}^3 E_1  \big ) ) \Big )^{\sharp}+ \Ric U_0- 2 \big ( c_{13}^3 \big )^2 U_0^1 E_1\\
=&( \dd^\ast \dd U_0^\flat+\mathcal L_{c_{13}^3 {E_1}} U_0^\flat- 2 \big ( c_{13}^3 \big )^2 U_0^1 E^1)^\sharp,
\end{split}
\end{equation*}
where we use  that the $i$'th component function of the Lie derivative $\mathcal L_Xu^\flat$ in the frame $E_i$ can be written as (see for example \cite{CC25})
\be
(\mathcal L_Xu^\flat)_i=g(\nabla_X u, E_i)+g(u, \nabla_{E_i}X).
\ee
The last line above is exactly formula \eqref{o4} when apply the $\flat$ operator.  Then similarly as for the Navier boundary condition, we can obtain formula \eqref{o2} by adding $2 \big ( c_{13}^3 \big )^2 U_0^1 E_1$.  This completes the proof of Theorems \ref{thm1} and \ref{thmd}.
 
 \section{Proof of Theorem \ref{thm2}: expansion in the normal direction with the Hodge condition}\label{sectHodge}

 We would like to perform an asymptotic expansion in the normal direction.  We let
\be\label{exp:sigma}
v=\sigma^\alpha U_\alpha,
\ee
where $\sigma$ is the (signed) distance from the ellipsoid, and each $U_\alpha$ is extended by parallel transport in the normal direction.  The normal $N$ is also extended by parallel transport in the normal direction, and denoted still by $N$.  In particular, we have  
\[
\tn_N U_\alpha= 0, \quad \tn_N N=0,
\]
for all points in the tubular neighborhod $\mathcal T(\eps_0)$.

To plug into \eqref{project}, we compute the following term.
\[
\tn_N\tn_N(\sigma^\alpha U_\alpha)=\tn_N (N(\sigma^\alpha)U_\alpha+\sigma^\alpha \tn_N U_\alpha)=N(N\sigma^\alpha)U_\alpha,
\]
 by parallel transport condition.  Next for each $\alpha$
 \[
 N\sigma^\alpha=\alpha \sigma^{\alpha-1}N\sigma,
 \]
 so
 \[
N N\sigma^\alpha=\alpha N(\sigma^{\alpha-1}N\sigma)=\alpha(\alpha-1)\sigma^{\alpha-2}(N\sigma)^2+\alpha \sigma^{\alpha-1}NN\sigma.
 \]
By \eqref{Npt}
 \[
 N\sigma=\partial_\sigma \sigma=1,
 \]
 so $NN\sigma=0$, and we get
 \[
N N\sigma^\alpha=\alpha(\alpha-1)\sigma^{\alpha-2},
 \]
 and
 \be\label{eq1}
 (\tn_N\tn_N(\sigma^\alpha U_\alpha))^T=2U_2.
 \ee

Then, from \eqref{project}, it follows
 
 \be\label{projectE11}
(-\dv\tilde \nabla v)^T=\nabla^\ast  \nabla U_0-\Ric U_0+nH[N,v]^T-2U_2.
\ee
 Next, we observe that if $U_\alpha$ is a vector field on $E$, 
\[
U_\alpha=U_\alpha^{\lp} \partial_{\lp}+U_\alpha^{\lt} \partial_{\lt},
\]
then its extension by parallel transport in the direction $N=\partial_{\sigma}$ is
\[
U_{\alpha}=\frac{U_\alpha^{\lp}}{1+\sigma \tfrac{a}{\lambda^3}}\partial_{\lp}+\frac{U_\alpha^{\lt}}{1+ \tfrac{\sigma}{a\lambda}}\partial_{\lt}.
\]

Now we calculate the relations given by the Hodge boundary condition in terms of $v^\flat$ at distance $\sigma$ of the ellipsoid. The condition reads
\[
\iota^*_{\tilde E_\sigma}(\mathcal{L}_Nv^\flat)=0,
\]
where
\begin{equation}\label{Esigma}
\tilde E_{\sigma} = \big \{ p + \sigma N_p : p \in E     \big \}.
\end{equation}
We begin calculating  
\begin{align*}
v^\flat&= g_{\tilde\phi\tilde\phi} v^{\tilde\phi} d\tilde\phi+g_{\tilde\theta\tilde\theta}v^{\tilde\theta} d\tilde\theta\\
&= \sigma^\alpha \frac{U^{\tilde\phi}_\alpha}{(1-\sigma \kappa_1)}g_{\tilde\phi\tilde\phi}d\tilde\phi + \sigma^\alpha \frac{U^{\tilde\theta}_\alpha}{(1-\sigma \kappa_2)}g_{\tilde\theta\tilde\theta}d\tilde\theta\\
&= \sigma^\alpha U^{\tilde\phi}_\alpha(1-\sigma \kappa_1)\lambda^2 d\tilde\phi + \sigma^\alpha U^{\tilde\theta}_\alpha(1-\sigma \kappa_2)a^2 \sin^2\tilde\phi d\tilde\theta
\end{align*}
Notice that in this case, $\iota_N v^\flat=0$, so in particular using the Cartan's formula we have
\[
\mathcal{L}_N v^\flat=\iota_N dv^\flat.
\]
We now calculate 
\begin{align*}
\iota_N dv^\flat&= \partial_\sigma(\sigma^\alpha U^{\tilde\phi}_\alpha(1-\sigma \kappa_1)\lambda^2) d\tilde\phi  +\partial_\sigma(\sigma^\alpha U^{\tilde\theta}_\alpha(1-\sigma \kappa_2)a^2 \sin^2\tilde\phi) d\tilde\theta \\
&= [\alpha \sigma^{\alpha -1} U_\alpha^{\tilde\phi}(1-\sigma\kappa_1)\lambda^2-\sigma^\alpha U_\alpha^{\tilde\phi}\kappa_1\lambda^2]d\tilde\phi 
+ [\alpha \sigma^{\alpha -1} U_\alpha^{\tilde\theta}(1-\sigma\kappa_2)a^2\sin^2\tilde\phi-\sigma^\alpha U_\alpha^{\tilde\theta}\kappa_2 a^2\sin^2\tilde\phi]d\tilde\theta.
\end{align*}

Gathering the powers of $\sigma$, we conclude that the Hodge boundary condition at distance $\sigma$ gives us the following relations
\begin{align}
&   \sigma^0: U^{\tilde\phi}_1=\kappa_1 U^{\tilde\phi}_0, \, U_1^{\tilde\theta}=\kappa_2 U_0^{\tilde\theta}\label{bc:line1} \\
&\sigma^1: U^{\tilde\phi}_2=(\kappa_1)^2 U^{\tilde\phi}_0, \, U^{\tilde\theta}_2=(\kappa_2)^2 U_0^{\tilde\theta}.\label{bc:line2}
\end{align}

We now proceed to simplify formula \eqref{projectE11}. First, notice that with the Navier boundary condition and \eqref{NH}, we have $[N,v]^T=2sv$.

So the formula \eqref{projectE11} becomes 
\be
(-\dv\tilde \nabla v)^T=\nabla^\ast  \nabla U_0-\Ric U_0+2(\kappa_1+\kappa_2)sU_0-2U_2.
\ee

Using \eqref{bc:line2} we obtain
\begin{align*}
(-\dv\tilde \nabla v)^T_j&=\nabla_i \nabla_i U_0^j-(\kappa_1\kappa_2) U_0^j+2(\kappa_1+\kappa_2)\kappa_j U_0^j -2(\kappa_j)^2U_0^j\\
&=\nabla_i \nabla_i U_0^j+(\kappa_1\kappa_2) U_0^j,
\end{align*}
which by \eqref{BW} is exactly the Hodge Laplacian.

\section{Comparison to previous works}\label{comp}

\subsection{Expansion in the normal direction, by parallel transport: Navier condition}
Here we would like to recover Miura's heuristic result \cite{Miura}, but using the formula \eqref{project} we have been working with.  

In this case, the beginning is the same as in the Hodge condition, and using that the Navier condition is equivalent to the Lie bracket condition, from \eqref{project} and \eqref{eq1}, we can start with
\be\label{projectE1}
(-\dv\tilde \nabla v)^T=\nabla^\ast  \nabla v-\Ric v-2U_2.
\ee

Next, the boundary condition distance $\sigma$ away gives
\[
0=[N,v]^T=[\partial_{\sigma}, v]^T=[\partial_{\sigma}, \sigma^\alpha U_\alpha]^T=\sigma^\alpha[\partial_{\sigma},  U_\alpha]^T+\alpha \sigma^{\alpha-1}U_\alpha^T,
\]
and for each $\alpha$
\[
[\partial_{\sigma},  U_\alpha]^T=\partial_\sigma\Big(\frac{U_\alpha^{\lp}}{1+\sigma \tfrac{a}{\lambda^3}}\Big)\partial_{\lp}+\partial_\sigma\Big(\frac{U_\alpha^{\lt}}{1+ \tfrac{\sigma}{a\lambda}}\Big)\partial_{\lt}=
-\frac{U_\alpha^{\lp}}{(1+\sigma \tfrac{a}{\lambda^3})^2}\frac a{\lambda^3}\partial_{\lp}-\frac{U_\alpha^{\lt}}{(1+ \tfrac{\sigma}{a\lambda})^2}\frac 1{a\lambda}\partial_{\lt},
\]
which we recognize as
\[
[\partial_{\sigma},  U_\alpha]^T= \frac{U_\alpha^{\lp}}{(1-\sigma \kappa_1)^2}\kappa_1\partial_{\lp}+\frac{U_\alpha^{\lt}}{(1-\sigma \kappa_2)^2}\kappa_2\partial_{\lt},
\]
where $\kappa_1, \kappa_2$ are the principal curvatures of $E$ at $p$.  It follows
\[
0=[N,v]^T= \sigma^\alpha \Big(\frac{U_\alpha^{\lp}}{(1-\sigma \kappa_1)^2}\kappa_1\partial_{\lp}+\frac{U_\alpha^{\lt}}{(1-\sigma \kappa_2)^2}\kappa_2\partial_{\lt}\Big)+\alpha \sigma^{\alpha-1}U_\alpha.
\]
Multiplying by $(1-\sigma\kappa_1)^2(1-\sigma\kappa_2)^2$ and collecting powers of $\sigma$ we deduce

\begin{align}
&   \sigma^0:\quad \kappa_1U_0^{\lp}+U_1^{\lp}=0, \quad \kappa_2U_0^{\lt}+U_1^{\lt}=0,\label{1sline}\\
&\sigma^1:\quad U_2^{\lp}=\kappa_2(\kappa_1U_0^{\lp}+U_1^{\lp}),\quad U_2^{\lt}=\kappa_1(\kappa_2U_0^{\lt}+U_1^{\lt}).\label{2ndline}
\end{align}
Plugging in \eqref{1sline} into \eqref{2ndline}, we have $U_2=0$, and \eqref{projectE11} reduces to the deformation Laplacian, which is consistent with what was obtained by Miura in \cite{Miura}.

\appendix
\section{Power series expansion: justification}\label{justify}
We start with the following definition.

\begin{defn}
Let $E = \{ (x,y,z) \in \mathbb{R}^3 : \frac{x^2}{a^2} + \frac{y^2}{a^2} + z^2 = 1   \}$, and let $\Psi = (\Psi^1 , \Psi^2 , \Psi^3 ) : D \rightarrow E$ be a local parametrization of
$E$, with $D$ be an open set in $\mathbb{R}^2$. We say that $\Psi : D \rightarrow E$ is a \emph{real analytic} if the three component functions $\Psi^1 , \Psi^2 , \Psi^3$ are real analytic functions on $D$.
\end{defn}

\begin{remark}\label{RemarkC1}
Here, we would like to note that: \emph{each point $p_0 \in E$ can be covered by at least one real analytic local parametrization $\Psi : D \rightarrow E$ so that $p_0 \in \Psi (D)$.} 
\end{remark}

\begin{defn}
Let $v = (v^1 , v^2 , v^3) = v^1 \partial_x + v^2 \partial_y + v^3 \partial_y$ be a smooth vector field defined in some open neighborhood $\mathcal{N}$ of the ellipsoid $E$ in $\mathbb{R}^3$. We say that $v$ is a real analytic vector field on $\mathcal{N}$ if all the three component functions  $v^1 , v^2 , v^3$ are real analytic functions on $\mathcal{N}$.
\end{defn}

The goal of this appendix is to justify a power series expansion for a real analytic vector field in a neighborhood of the ellipsoid $E$.  We accomplish this in two steps.  First, we consider a strictly tangential vector field to the rescaled ellipsoid $E$.  Then we upgrade the result to cover the case of a general vector field, which means having a nonzero $\partial_\rho$ component.

\begin{lemma}\label{basic}
Let $E = \{ (x,y,z) \in \mathbb{R}^3 : \frac{x^2}{a^2} + \frac{y^2}{a^2} + z^2 = 1   \}$, and let $\eps_0>0$. Consider the open neighborhood $\mathcal{N}(\eps_0)$ of $E$ given by
\begin{equation}\label{rescaledNeighbour}
\mathcal{N}(\eps_0) = \Big \{ (x,y,z) \in \mathbb{R}^3 : 1-\eps_0 < \Big ( \frac{x^2}{a^2} + \frac{y^2}{a^2} + z^2 \Big )^{\frac{1}{2}} < 1 + \eps_0  \Big \}.
\end{equation}
Let $v$ be a real analytic vector field on $\mathcal{N}(\eps_0)$ which is tangential to every single rescaled ellipsoid $\{ (\rho x , \rho y , \rho z ) : (x,y,z) \in E \}$, for all $\rho \in (1-\eps_0 , 1 + \eps_0 )$. Then, it follows that there exists some $\widetilde{\eps} \in (0 ,\eps_0 )$, together with a uniquely determined sequence
$\{\tilde U_\alpha\}_{\alpha = 0}^{\infty}$ of real analytic vector fields on $\mathcal{N}(\widetilde{\eps})$
which satisfies the following properties:
\begin{itemize}
\item For each integer $\alpha \geq 0$, $\tilde U_{\alpha}$ is parallel along every single straight line radiating out from the origin $O = (0,0,0)$.
\item For each integer $\alpha \geq 0$, $\tilde U_{\alpha}$ is tangential to every single rescaled ellipsoid $\{ (\rho x , \rho y , \rho z ) : (x,y,z) \in E \}$, for all $\rho \in (1- \widetilde{\eps} , 1 + \widetilde{\eps} )$.
\item $v = \sum_{\alpha =0}^{\infty} (\rho - 1)^{\alpha}\tilde U_{\alpha}$ holds on $\mathcal{N}(\widetilde{\eps})$ .
\end{itemize}
\end{lemma}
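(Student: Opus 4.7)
The main idea is to work in Cartesian components and exploit real analyticity to obtain a convergent Taylor expansion in $\rho$ about $\rho=1$, then show that the Taylor coefficients automatically inherit both the ``parallel along rays'' property and tangentiality to the rescaled ellipsoids. Concretely, write $v = v^{1}\partial_{x} + v^{2}\partial_{y} + v^{3}\partial_{z}$ with each $v^{i}$ real analytic on $\mathcal N(\eps_{0})$. For any real analytic local parametrization $\Psi : D \to E$ guaranteed by Remark \ref{RemarkC1}, the radial map $(\rho,u) \mapsto \rho\, \Psi(u)$ is real analytic from $(1-\eps_{0},1+\eps_{0}) \times D$ into $\mathcal N(\eps_{0})$, so the composite
\begin{equation*}
F(\rho,u) := v\bigl(\rho\, \Psi(u)\bigr)
\end{equation*}
is real analytic in $(\rho,u)$. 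Hence it admits a convergent Taylor expansion
\begin{equation*}
F(\rho,u) \;=\; \sum_{\alpha=0}^{\infty} \tilde V_{\alpha}(u)\, (\rho-1)^{\alpha},
\end{equation*}
with $\tilde V_{\alpha} : D \to \R^{3}$ real analytic, convergent uniformly on a polydisc neighborhood of $\{1\}\times D'$ for each relatively compact $D' \Subset D$.

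\textbf{Construction and verification of the $\tilde U_{\alpha}$.} I would then define $\tilde U_{\alpha}$ on $\tilde\Psi\bigl( (1-\widetilde\eps, 1+\widetilde\eps) \times D' \bigr)$ by
\begin{equation*}
\tilde U_{\alpha}\bigl( \rho\, \Psi(u) \bigr) \;:=\; \tilde V_{\alpha}^{1}(u)\, \partial_{x} + \tilde V_{\alpha}^{2}(u)\, \partial_{y} + \tilde V_{\alpha}^{3}(u)\, \partial_{z},
\end{equation*}
so that its Cartesian components depend only on $u$ and not on $\rho$; this makes $\tilde U_{\alpha}$ constant (and hence parallel) along every ray from the origin. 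For tangentiality, note that the outward normal to $\rho E$ at $\rho\, \Psi(u)$ is parallel to the outward normal $N_{E}(\Psi(u))$ to $E$ at $\Psi(u)$ (both are proportional to $(x,y,a^{2}z)$ at the relevant points). Since $v$ is tangent to every rescaled ellipsoid, this gives the identity $F(\rho,u) \cdot N_{E}(\Psi(u)) = 0$ for all $\rho$ and $u$, and by the uniqueness of Taylor coefficients, $\tilde V_{\alpha}(u) \cdot N_{E}(\Psi(u)) = 0$ for every $\alpha$. Thus each $\tilde U_{\alpha}$ is tangent to $E$ along $\Psi(u)$, and, by the ray-parallel property together with the fact that parallel transport along rays preserves the tangent spaces of all rescaled ellipsoids, $\tilde U_{\alpha}$ is tangent to every $\rho E$.

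\textbf{Global assembly and the main obstacle.} By Remark \ref{RemarkC1} and compactness of $E$, cover $E$ by finitely many analytic parametrizations $\Psi_{j} : D_{j} \to E$; each yields a radius $\widetilde\eps_{j} > 0$ for the convergence of the corresponding expansion, and I take $\widetilde\eps := \min_{j} \widetilde\eps_{j}$. Uniqueness of the Taylor expansion in $\rho - 1$, applied coefficient-by-coefficient, forces the $\tilde U_{\alpha}$'s defined in different charts to agree on overlaps, yielding globally defined real analytic vector fields on $\mathcal N(\widetilde\eps)$ with the three required properties, and uniqueness of the whole sequence $\{\tilde U_{\alpha}\}$ follows again by uniqueness of Taylor coefficients. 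The one delicate technical point is the existence of a uniform radius $\widetilde\eps$: this reduces to a standard Cauchy-estimate argument, using the uniform real analyticity of $v$ on a tubular neighborhood of the compact set $E$ together with the polynomial dependence of $\rho\,\Psi(u)$ on $\rho$, from which one extracts a lower bound on the radii of convergence that is uniform in $u$.
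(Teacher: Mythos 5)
Your proposal is correct, and it reaches the same conclusion by a noticeably different organization of the argument than the paper. The paper uses the tangentiality hypothesis at the very start: in the analytic chart $\widetilde\Psi(\rho,\tau^1,\tau^2)=\rho\,\Psi(\tau^1,\tau^2)$ it writes $v=v^{\tau^1}\partial_{\tau^1}+v^{\tau^2}\partial_{\tau^2}$ (no $\partial_\rho$ component), expands the scalar functions $\rho\, v^{\tau^k}\circ\widetilde\Psi$ in powers of $\rho-1$, and builds the coefficients as $\tilde U_\alpha=\sum_k v^k_\alpha\circ\widetilde\Psi^{-1}\,\rho^{-1}\partial_{\tau^k}$, where the factorization $v=(\rho v^{\tau^k})\cdot(\rho^{-1}\partial_{\tau^k})$ is precisely what makes each coefficient field ray-parallel; chart-independence is then proved by expressing the coefficients in a parallel orthonormal frame along each ray and matching one-variable Taylor coefficients. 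You instead expand the Cartesian $\mathbb{R}^3$-valued function $F(\rho,u)=v(\rho\Psi(u))$ in $\rho-1$, so ray-parallelism is automatic (constant Cartesian components along rays), and you recover tangentiality of the coefficients as an output, by applying Taylor uniqueness to the scalar identity $F(\rho,u)\cdot N_E(\Psi(u))=0$, using that all the rescaled ellipsoids share the same normal direction along a ray; chart-independence is then immediate because at $q\in E$ your coefficients are just the Taylor coefficients of the one-variable map $t\mapsto v(tq)$ (this is essentially the argument the paper itself uses later for the $v^\rho$ component, but not in its proof of this lemma). Both routes face the same technical point of a uniform $\widetilde\eps$, resolved identically by locally uniform convergence of the analytic expansion plus compactness of $E$; in your final assembly you should make explicit the standard shrinking step (pass to relatively compact $D_j'\Subset D_j$ with $\bigcup_j\Psi_j(D_j')$ still covering $E$, as you indicated earlier with $D'\Subset D$) before taking the minimum of the radii. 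Your Cartesian formulation buys a shorter verification of parallelism and uniqueness; the paper's frame formulation has the advantage of producing the expansion directly in the intrinsic form used in the body of the paper.
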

\begin{proof}
\textbf{Step 1.}
 For any open subset $U$ of the ellipsoid $E$, we define  $$\mathcal{C}(U) = \{ (\rho x , \rho y , \rho z ) : \rho > 0 , (x,y,z) \in U \},$$ 
 which is an open set in $\mathbb{R}^3$. Let $\Psi = (\Psi^1 , \Psi^2 , \Psi^3 ) : D \rightarrow E$ be a real analytic local parametrization of $E$, so that $\Psi (D)$ is some open set in $E$. Then, we can consider the associated real analytic
local parametrization $\widetilde{\Psi} : (0,\infty ) \times D \rightarrow \mathcal{C}(\Psi(D))$ given by
\begin{equation}\label{associatedPar}
\widetilde{\Psi} ( \rho , \tau^1 , \tau^2 ) = ( \rho \Psi^1 ( \tau^1 , \tau^2 ) , \rho \Psi^2 ( \tau^1 , \tau^2 ) , \rho \Psi^3 ( \tau^1 , \tau^2 )         ) .
\end{equation}
In accordance with the real analytic version of the inverse mapping theorem, the real analyticity of $\widetilde{\Psi}$ implies that the inverse map
$$ \widetilde{\Psi}^{-1} = ( \rho , \tau^1 , \tau^2 ) : \mathcal{C}(\Psi(D)) \rightarrow (0 , \infty ) \times D $$ is also real analytic. Thus, the three associated coordinate functions $\rho , \tau^1 , \tau^2$ are all real analytic on $\mathcal{C}(\Psi(D))$.  

\textbf{Step 2.} Consider a real analytic vector field $v = (v^1 , v^2 , v^3) = v^1 \partial_x + v^2 \partial_y + v^3 \partial_y$ on $\mathcal{N}(\eps_0)$ given by 
\eqref{rescaledNeighbour},
which is
tangential to all the rescaled ellipsoids $\{ (\rho x , \rho y , \rho z) : (x,y,z) \in E   \} $, for any $\rho \in (1-\eps_0 , 1 + \eps_0)$ .

Take an arbitrary point $p_0 = (x_0, y_0 , z_0) \in E$. By Remark \ref{RemarkC1}, we can choose a
real analytic local parameterization $\Psi : D \rightarrow E$ covering $p_0$ so that $p_0 \in \Psi (D)$. Let
$\widetilde{\Psi} : (1-\eps_0 , 1 + \eps_0 )\times D \rightarrow \mathcal{C}(\Psi (D))\cap \mathcal{N}(\eps_0) $ be the associated real analytic
local parametrization as given in \eqref{associatedPar}. As said in \textbf{Step 1}, the three coordinate functions $\rho , \tau^1 ,\tau^2 $ are real analytic on $\mathcal{C}(\Psi (D))\cap \mathcal{N}(\eps_0) $.
We note that the coordinate function $\rho$ coincides with $\big ( \frac{x^2}{a^2} + \frac{y^2}{a^2} + z^2 \big )^{\frac{1}{2}}$.

Under
this real analytic coordinate system $(\rho, \tau^1 ,\tau^2 ) = \widetilde{\Psi}^{-1}$, we can express $v$ locally on  $\mathcal{C}(\Psi (D))\cap \mathcal{N}(\eps_0) $ as follows
\begin{equation}\label{localdecomp}
v = v^{\tau^1} \partial_{\tau^1} + v^{\tau^2} \partial_{\tau^2} .
\end{equation}
For each $k=1,2$, we have for any $p \in \mathcal{C}(\Psi (D))\cap \mathcal{N}(\eps_0) $ that
\begin{equation}\label{straightforward}
\partial_{\tau^k} \Big |_{p} = ( \rho (p) \partial_{\tau^k} \Psi^{1} (\widetilde{\Psi}^{-1}(p))   , \rho (p) \partial_{\tau^k} \Psi^{2} (\widetilde{\Psi}^{-1}(p)) , \rho (p) \partial_{\tau^k} \Psi^{3} (\widetilde{\Psi}^{-1}(p))         ).
\end{equation}
Since $\rho$, $\Psi^{k}$, and $\widetilde{\Psi}^{-1}$ are all real analytic, it is apparent from \eqref{straightforward} that the vector field $\partial_{\tau^k}$ is real analytic on $\mathcal{C}(\Psi (D))\cap \mathcal{N}(\eps_0)$. Thus, it follows that $v^{\tau^1}$ and $v^{\tau^2}$ are real analytic on $\mathcal{C}(\Psi (D))\cap \mathcal{N}(\eps_0)$. 

Hence, $v^{\tau^1}\circ \widetilde{\Psi}$ and $v^{\tau^1}\circ \widetilde{\Psi}$ are real analytic functions on $(1-\eps_0 ,1 + \eps_0) \times D$. We write
$\widetilde{\Psi}^{-1} (p_0) = (1, \tau_0^1 , \tau_0^2)$. Then, we can express $\rho v^{\tau^k}\circ \widetilde{\Psi}$ as a power series expansion in terms of powers of $(\rho -1 )$, powers of $(\tau^1 - \tau_0^1)$ and powers of $(\tau^2 - \tau)^2$ as follows
\begin{equation}\label{expan1}
\begin{split}
\rho \big( v^{\tau^k}\circ \widetilde{\Psi} \big ) (\rho, \tau^1 , \tau^2 )
&=  \sum_{\alpha=0}^{\infty} \sum_{a = 0}^{\infty} \sum_{b=0}^{\infty} v_{\alpha , a , b}^k \big ( \tau^1 - \tau_0^1  \big )^a  \big ( \tau^2 - \tau_0^2  \big )^b  (\rho - 1)^{\alpha}    \\
&=  \sum_{\alpha=0}^{\infty} (\rho - 1)^{\alpha} v_{\alpha}^{k} (\tau^1 , \tau^2 ) ,
\end{split}
\end{equation}
which is valid for all $(\rho , \tau^1 , \tau^2) \in (1-\eps , 1 + \eps ) \times (\tau_0^1 -\eps , \tau_0^1 + \eps) \times (\tau_0^2 -\eps , \tau_0^2 + \eps)$, with some sufficiently small $\eps \in (0, \eps_0)$.
In \eqref{expan1}, $v_{\alpha, a, b}^k \in \mathbb{R}^1$, for $\alpha , a , b \in \mathbb{N}$, and
\begin{equation}
v_{\alpha}^{k} (\tau^1 , \tau^2 ) = \sum_{a = 0}^{\infty} \sum_{b=0}^{\infty} v_{\alpha , a , b}^k \big ( \tau^1 - \tau_0^1  \big )^a  \big ( \tau^2 - \tau_0^2  \big )^b.
\end{equation}

Consider the associated open set
\begin{equation}
U(p_0) = \widetilde{\Psi} ( (1-\eps , 1 + \eps ) \times (\tau_0^1 -\eps , \tau_0^1 + \eps) \times (\tau_0^2 -\eps , \tau_0^2 + \eps)     ).
\end{equation}
By
\eqref{expan1}, we can now express $v$ locally on $U(p_0)$ as
\begin{equation}\label{PowerSeriesExapn}
\begin{split}
v & = \sum_{k=1}^2 \rho v^{\tau^k} \frac{1}{\rho} \partial_{\tau^k} \\
 & = \sum_{k=1}^2 \sum_{\alpha = 0}^{\infty} (\rho - 1)^\alpha v_{\alpha}^{k}\circ \widetilde{\Psi}^{-1} \frac{1}{\rho} \partial_{\tau^k} \\
% & = \sum_{\alpha = 0}^{\infty} (\rho - 1)^{\alpha} \sum_{k=1}^2 v_{\alpha}^{k}\circ \widetilde{\Psi}^{-1}\frac{1}{\rho} \partial_{\tau^k} \\
& = \sum_{\alpha = 0}^{\infty} (\rho -1)^{\alpha} \tilde U_{\alpha} ,
\end{split}
\end{equation}
where
\begin{equation}\label{clear}
\tilde U_{\alpha} = \sum_{k=1}^2 v_{\alpha}^{k}\circ \widetilde{\Psi}^{-1}\frac{1}{\rho} \partial_{\tau^k}
\end{equation}
are real analytic on $U(p_0)$.
Since ${\rho^{-1}} \partial_{\tau^k}$ are parallel vector fields along every straight line radiating out from the origin and that
$v_{\alpha}^k \circ \widetilde{\Psi}^{-1}$ are constant along each straight line radiating out from the origin,
it follows from \eqref{clear} that $U_{\alpha}$ is also a parallel vector field along every straight line radiating out from the origin.\\

\textbf{Step 3.} Note that the expansion $v = \sum_{\alpha =0 }^{\infty} (\rho - 1 )^{\alpha} U_{\alpha}$ as in \eqref{PowerSeriesExapn}, with
$U_{\alpha}$ as given in \eqref{clear}, arises from a selected real analytic local parametrization $\Psi : D \rightarrow E$. Here we show the expansion is independent of the choice of the real analytic local parameterization $\Psi$.
Let $\Psi_{\sharp} : D_{\sharp} \rightarrow E$ be another real analytic local parametrization with $p_0 \in \Psi (D) \cap \Psi_{\sharp} (D_{\sharp})$.
Now, we repeat all the steps in the above argument by replacing $\Psi$ by $\Psi_{\sharp}$, and we get the associated local power series expansion $v = \sum_{\alpha =0 }^{\infty} (\rho - 1 )^{\alpha} U_{\sharp , \alpha}$. 

We show
$U_{\alpha} (p_0) = U_{\sharp , \alpha} (p_0)$ for all $\alpha \geq 0$. Take $\{e_1 , e_2\}$ be an orthonormal basis of $T_{p_0}E$. For each
$k = 1,2$, let $\widetilde{e}_k$ be the parallel vector field along the radial straight line $\{\rho p_0 : \rho > 0\}$ passing through the origin and $p_0$. For each $\alpha \geq 0$, by construction, both $U_\alpha$ and $U_{\sharp , \alpha}$ are parallel vector fields along $\{\rho p_0 : \rho > 0\}$. Then 
\begin{equation}
\begin{split}
U_{\alpha} & = A_{\alpha}^1 \widetilde{e}_1 + A_{\alpha}^2 \widetilde{e}_2, \\
U_{\sharp , \alpha} & = A_{\sharp , \alpha}^1 \widetilde{e}_1 + A_{\sharp , \alpha}^2 \widetilde{e}_2 ,
\end{split}
\end{equation}
with $A_{\alpha}^1 ,  A_{\alpha}^2 , A_{\sharp , \alpha}^1 , A_{\sharp , \alpha}^2 \in \mathbb{R}$. Thus, the identity
\begin{equation*}
\sum_{\alpha =0 }^{\infty} (\rho - 1 )^{\alpha} U_{\alpha} = \sum_{\alpha =0 }^{\infty} (\rho - 1 )^{\alpha} U_{\sharp,\alpha}
\end{equation*}
can be written as

\begin{equation*}
\Big ( \sum_{\alpha =0 }^{\infty} (\rho - 1 )^{\alpha}  A_{\alpha}^1 \Big ) \widetilde{e}_1 +\Big ( \sum_{\alpha =0 }^{\infty} (\rho - 1 )^{\alpha}  A_{\alpha}^2 \Big ) \widetilde{e}_2
= \Big ( \sum_{\alpha =0 }^{\infty} (\rho - 1 )^{\alpha}  A_{\sharp , \alpha}^1 \Big ) \widetilde{e}_1 +\Big ( \sum_{\alpha =0 }^{\infty} (\rho - 1 )^{\alpha}  A_{\sharp , \alpha}^2 \Big ) \widetilde{e}_2 .
\end{equation*}
By comparing the $\widetilde{e}_k$-components,  we get
for each $k=1,2$ that
\begin{equation*}
\sum_{\alpha =0 }^{\infty} (\rho - 1 )^{\alpha}  A_{\alpha}^k = \sum_{\alpha =0 }^{\infty} (\rho - 1 )^{\alpha}  A_{\sharp , \alpha}^k ,
\end{equation*}
which gives $A_{\alpha}^k =  A_{\sharp , \alpha}^k$, for all $k =1,2$ and all $\alpha \geq 0$. Hence, we must have $U_{\alpha} = U_{\sharp , \alpha}$ for all $\alpha \geq 0$.\\

\textbf{Step 4.} Now, since $E$ is compact, we can find a finite list $\Psi_j : D_j \rightarrow E$ of real analytic local parametrizations of $E$, with $1 \leq j \leq l$ covering $E$ so that
\begin{equation}\label{cover}
E =  \cup_{j=1}^{l} \Psi_j (D_j)
\end{equation}
and that we have the power series expansion $v = \sum_{\alpha = 0}^{\infty} (\rho - 1)^{\alpha} U_{\alpha}$ on $\mathcal{C}(\Psi_j (D_j)) \cap \mathcal{N}(\eps_j)$, with $0 < \eps_j < \eps_0$, where $U_{\alpha}$ are parallel vector fields along all the radial straight lines emitting from the origin. We have already proved that $U_{\alpha}$ are all independent of the choice of the real analytic local parametrization $\Psi_j$. Thus,
by taking $\widetilde{\eps} = \min \{\eps_j : 1 \leq j \leq l \}$, we can get a global power series expansion $v = \sum_{\alpha = 0}^{\infty} (\rho - 1)^{\alpha} U_{\alpha}$ defined on the neighborhood $\mathcal{N}(\widetilde{\eps})$ of $E$.
\end{proof}

Next, consider a general real analytic vector field $v$ on $\mathcal{N}(\eps_0)$. Recall that the distance function
$\rho$ coincides with $\big ( \frac{x^2}{a^2} + \frac{y^2}{a^2} + z^2 \big )^{\frac{1}{2}}$ , thus $\rho$ is real analytic on $\mathbb{R}^3-\{ (0,0,0) \}$. Notice that
\begin{equation*}
\begin{split}
\dd \rho & = \frac{\partial \rho}{\partial x} \dd x + \frac{\partial \rho}{\partial y}  \dd y +  \frac{\partial \rho}{\partial z}  \dd z ,\\
v  &= v^1  \frac{\partial}{\partial x} + v^2  \frac{\partial}{\partial y} + v^3  \frac{\partial}{\partial z} ,
\end{split}
\end{equation*}
where the component functions $\frac{\partial \rho}{\partial x}$ ,$\frac{\partial \rho}{\partial y}$ , $\frac{\partial \rho}{\partial z}$,
$v^1$ , $v^2$ , $v^3$ are all real analytic on $\mathcal{N}(\eps_0)$. Thus, it follows that the function
\begin{equation}
v(\rho) = \dd \rho (v) = \frac{\partial \rho}{\partial x} v^1 + \frac{\partial \rho}{\partial y} v^2 + \frac{\partial \rho}{\partial z} v^3
\end{equation}
is surely real analytic on $\mathcal{N}(\eps_0)$. Notice that the real analytic function $v(\rho)$ coincides with the $v^{\rho}$ component in the expression $v = v^{\rho} \partial_{\rho}+ v^{\phi} \partial_{\phi} + v^{\theta} \partial_{\theta}$. For this reason, from now on, we may sometimes write the real analytic function $v(\rho)$ alternatively as $v^{\rho}$.  

Next, we define the following vector field
\begin{equation}\label{ExtraExtraVeryEasy1}
w = v - v^\rho \partial_{\rho} .
\end{equation}
It is easy to check that $\partial_{\rho} = \rho^{-1} \big ( x \partial_x + y \partial_y + z \partial_z \big )$. Thus, it follows that
$\partial_{\rho}$ is real analytic on $\mathbb{R}^3-\{(0,0,0)\}$. It follows that the vector field $w$ as given by \eqref{ExtraExtraVeryEasy1} is real analytic on $\mathcal{N}(\eps_0)$. 

Observe that
\begin{equation}\label{ExtraExtraVeryEasy2}
g_{\mathbb{R}^3} ( \nabla \rho  , w ) = \dd \rho (w) = \dd \rho \big (v - v(\rho) \partial_{\rho} \big ) = v(\rho) - v(\rho) = 0,
\end{equation}
so \eqref{ExtraExtraVeryEasy2} implies that $w$ is everywhere tangential to all rescaled ellipsoids
$\{ (\rho x , \rho y , \rho z ) : (x,y,z) \in E \}$, for any $\rho \in (1-\eps_0 , 1 + \eps_0 )$. Thus, we can apply the result of
Lemma \ref{basic} to deduce that there exists some $\widetilde{\eps}_1 \in (0, \eps_0)$, together with a uniquely determined sequence $\{\widetilde{U}_{\alpha}\}_{\alpha =0}^{\infty}$ of real analytic vector fields on $\mathcal{N}(\widetilde{\eps}_1)$, such that
each $\widetilde{U}_{\alpha}$ is parallel along every straight line radiating out from the origin and is tangential to every rescaled ellipsoids $\{(\rho x , \rho y , \rho z) : (x,y,z) \in E\}$ for $1-\widetilde{\eps}_1 < \rho <1+\widetilde{\eps}_1$, and that the relation
\begin{equation}\label{previousExpansion}
w = \sum_{\alpha = 0}^{\infty} (\rho - 1)^{\alpha} \widetilde{U}_{\alpha} ,
\end{equation}
holds on  $\mathcal{N} (\widetilde{\eps}_1)$.\\

Next, take any $p_0 \in E$. Let $\Psi=(\Psi^1 , \Psi^2 , \Psi^3 ): D \rightarrow E$ be a real analytic local parametrization of $E$ for which $p_0 \in \Psi (D)$. As in the proof of Lemma \ref{basic}, consider the associated real analytic local parameterization
$\widetilde{\Psi} : (1- \eps_0 , 1 + \eps_0) \times D \rightarrow \mathcal{C} (\Psi(D))\cap \mathcal{N}(\eps_0)$  as given by
\begin{equation*}
\widetilde{\Psi} (\rho , \tau^1 , \tau^2 ) = ( \rho \Psi^1 ( \tau^1 , \tau^2 ) , \rho \Psi^2 ( \tau^1 , \tau^2 ) ,  \rho \Psi^3 ( \tau^1 , \tau^2 )  ).
\end{equation*}
Since $v^{\rho} = v(\rho )$ and $\widetilde{\Psi}$ are real analytic, it follows that the function $v^\rho \circ \widetilde{\Psi} : (1- \eps_0 , 1 + \eps_0) \times D \rightarrow \mathbb{R}^1$ is real analytic on $(1- \eps_0 , 1 + \eps_0) \times D $. We write $\widetilde{\Psi} (p_0) = ( 1 , \tau_0^1 , \tau_0^2 )$. Then, there exists some sufficiently small $\eps \in (0, \eps_0)$ such that
$(1-\eps , 1 + \eps ) \times ( \tau_0^1 - \eps , \tau_0^1 + \eps ) \times ( \tau_0^2 - \eps , \tau_0^2 + \eps ) \subset (1-\eps_0 , 1+ \eps_0 ) \times D$ and that the following relation
\begin{equation}\label{ExtraExtraExtraA5}
\begin{split}
v^{\rho} \circ \widetilde{\Psi} & = \sum_{\alpha=0}^{\infty} \sum_{a=0}^{\infty} \sum_{b=0}^{\infty} \lambda_{\alpha , a ,b }
(\tau^1 - \tau_0^1)^a (\tau^2 -\tau_0^2  )^b (\rho -1)^{\alpha} \\
& = \sum_{\alpha =0}^{\infty} (\rho-1)^{\alpha} \lambda_{\alpha} (\tau^1 , \tau^2 )
\end{split}
\end{equation}
holds on $(1-\eps , 1 + \eps ) \times ( \tau_0^1 - \eps , \tau_0^1 + \eps ) \times ( \tau_0^2 - \eps , \tau_0^2 + \eps )$. In \eqref{ExtraExtraExtraA5}, $\lambda_{\alpha , a , b} \in \mathbb{R}$, for $\alpha , a, b \in \mathbb{N}$, and
\begin{equation}
\lambda_{\alpha} (\tau^1 , \tau^2 ) = \sum_{a=0}^{\infty} \sum_{b=0}^{\infty} \lambda_{\alpha , a ,b }
(\tau^1 - \tau_0^1)^a (\tau^2 -\tau_0^2  )^b .
\end{equation}
As before, we consider the open set
\begin{equation}
U(p_0) = \widetilde{\Psi} ( (1-\eps , 1 + \eps ) \times (\tau_0^1 -\eps , \tau_0^1 + \eps) \times (\tau_0^2 -\eps , \tau_0^2 + \eps)     ) .
\end{equation}
 Based on \eqref{ExtraExtraExtraA5}, we can express $v^{\rho}$ on $U(p_0)$ as follows
 \begin{equation}\label{ExtraExtraExtraA8}
 \begin{split}
 v^{\rho} & = \sum_{\alpha = 0}^{\infty} (\rho - 1)^{\alpha} \lambda_{\alpha} \circ \widetilde{\Psi}^{-1} \\
 & = \sum_{\alpha = 0}^{\infty} (\rho - 1)^{\alpha} v^{\rho}_{\alpha} ,
 \end{split}
 \end{equation}
 where $v^{\rho}_{\alpha} = \lambda_{\alpha} \circ \widetilde{\Psi}^{-1}$ is real analytic on $U(p_0)$. Note that by construction, each
 $v^{\rho}_{\alpha}$ is constant along each straight line radiating out from the origin, and thus, we can also regard each $v^{\rho}_{\alpha}$ as a real analytic function on the open set $\Psi ( (\tau_0^1 -\eps , \tau_0^1 + \eps) \times (\tau_0^2 -\eps , \tau_0^2 + \eps)     )  $ of $E$.\\

Next, we want to see that the power series expansion $v^{\rho} = \sum_{\alpha = 0}^{\infty} (\rho - 1)^{\alpha} v^{\rho}_{\alpha} $ as obtained in \eqref{ExtraExtraExtraA8} is indeed independent of the choice of the real analytic local parametrization $\psi : D \rightarrow E$ which we choose to cover the point $p_0$. To see this, take any point $q_0 \in \Psi ( (\tau_0^1 -\eps , \tau_0^1 + \eps) \times (\tau_0^2 -\eps , \tau_0^2 + \eps)     )$, we look at the real analytic function $v^{\rho} (t q_0)$ in the real variable $t\in (1-\eps_0 , 1 + \eps_0 )$. But \eqref{ExtraExtraExtraA8} leads to the identity
\begin{equation}\label{ExtraExtraExtraA10}
v^{\rho} (t q_0) = \sum_{\alpha = 0}^{\infty} (t-1 )^{\alpha} v_{\alpha}^{\rho}(q_0) ,
\end{equation}
which holds for all $t \in (1-\eps , 1 + \eps )$. Notice that the real analytic function $v^{\rho} (t q_0)$ of a single variable $t$, which appears on the left hand side of \eqref{ExtraExtraExtraA10} is independent of the choice of the real analytic local parametrization $\Psi : D \rightarrow E$, and thus, the expression $\sum_{\alpha = 0}^{\infty} (t-1 )^{\alpha} v_{\alpha}^{\rho}(q_0)$  appearing on the right hand side of \eqref{ExtraExtraExtraA10} is really nothing but the power series expansion of the real analytic function $v^{\rho} (t q_0)$ in the variable $t$. As such, each $ v_{\alpha}^{\rho}(q_0)$ is just the coefficient in front of the $\alpha$-th power of $(t-1)$ as involved in the power series expansion of $v^{\rho} (t q_0)$ about $t =1$. Thus, this really says that
$v_{\alpha}^{\rho}(q_0)$ is really independent of the choice of the local parametrization $\Psi : D \rightarrow E$. \\

Now, we have proved the invariance of  $v^{\rho} = \sum_{\alpha = 0}^{\infty} (\rho - 1)^{\alpha} v^{\rho}_{\alpha} $ under any choice of any possible real analytic local parametrization. So, we are ready to repeat the very same argument in \textbf{Step 4} of Lemma \ref{basic} to get some $\widetilde{\eps}_2 \in (0, \eps_0)$ for which we can get a global power series expansion
\begin{equation}\label{ExtraExtraExtraA11}
v^{\rho} = \sum_{\alpha = 0}^{\infty} (\rho - 1)^{\alpha} v_{\alpha}^\rho ,
\end{equation}
on $\mathcal{N} (\widetilde{\eps}_2)$, where each real analytic function $v_{\alpha}^{\rho}$ is really constant along every single straight line radiating out from the origin. Thus, we can also regard each $v_{\alpha}^{\rho}$ as a real analytic function defined on $E$.
Finally, by taking $\widetilde{\eps} = \min \{\widetilde{\eps}_1 , \widetilde{\eps}_2  \}$, it follows from
\eqref{previousExpansion} and \eqref{ExtraExtraExtraA11} that the original real analytic vector field $v$ has the following power series expansion
\begin{equation}
v = \sum_{\alpha = 0}^{\infty} (\rho - 1)^{\alpha} v_{\alpha}^\rho \partial_{\rho} + \sum_{\alpha = 0}^{\infty} (\rho - 1)^{\alpha} \widetilde{U}_{\alpha} ,
\end{equation}
which holds on $\mathcal{N}(\widetilde{\eps})$. Thus, we have established the following result.

\begin{lemma}
Let $v$ be a real analytic vector field on $\mathcal{N}(\eps_0)$, where $0 < \eps_0 <1$. Then, it follows that there exists some sufficiently small $\widetilde{\eps} \in (0 , \eps_0)$ such that $v$ can be represented as a power series expansion on $\mathcal{N}(\widetilde{\eps})$ as follows
\begin{equation}
v = \sum_{\alpha = 0}^{\infty} (\rho - 1)^{\alpha} v_{\alpha}^\rho \partial_{\rho} + \sum_{\alpha = 0}^{\infty} (\rho - 1)^{\alpha} \widetilde{U}_{\alpha} ,
\end{equation}
where the real analytic functions $v_{\alpha}^\rho$ and real analytic vector fields $\widetilde{U}_{\alpha}$ must satisfy the following properties on $\mathcal{N} (\widetilde{\eps})$.
\begin{itemize}
\item Each $v_{\alpha}^\rho$ is constant along every single straight line radiating out from the origin of $\mathbb{R}^3$.
\item  Each $\widetilde{U}_{\alpha}$ is parallel along every straight line radiating out from the origin and is tangential to every rescaled ellipsoids $\{(\rho x , \rho y , \rho z) : (x,y,z) \in E\}$ for $1-\widetilde{\eps} < \rho <1+\widetilde{\eps}$.
\end{itemize}
\end{lemma}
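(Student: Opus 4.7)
The plan is to reduce the general case to Lemma \ref{basic} by splitting off the radial component of $v$. Specifically, I would introduce the real analytic scalar function $v^\rho := v(\rho) = \dd\rho(v)$ on $\mathcal{N}(\eps_0)$ (real analytic because $\rho$, the Cartesian components of $v$, and the partials $\partial_x \rho, \partial_y\rho, \partial_z\rho$ all are), and set
\begin{equation}
w := v - v^\rho \partial_\rho.
\end{equation}
Since $\partial_\rho = \rho^{-1}(x\partial_x + y\partial_y + z\partial_z)$ is real analytic on $\mathbb R^3 \setminus \{0\}$, so is $w$ on $\mathcal{N}(\eps_0)$. A direct check shows $g_{\R^3}(\nabla\rho, w) = \dd\rho(v) - v^\rho \dd\rho(\partial_\rho) = 0$, so $w$ is tangent to every rescaled ellipsoid in the neighborhood. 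Lemma \ref{basic} then delivers some $\widetilde{\eps}_1 \in (0,\eps_0)$ and a uniquely determined sequence $\{\widetilde U_\alpha\}$ of real analytic vector fields on $\mathcal{N}(\widetilde{\eps}_1)$, each parallel along rays from the origin and tangential to every rescaled ellipsoid, with $w = \sum_{\alpha\ge 0}(\rho-1)^\alpha \widetilde U_\alpha$ on $\mathcal{N}(\widetilde{\eps}_1)$.

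It remains to expand the scalar $v^\rho$. Pick $p_0 \in E$ and a real analytic local parametrization $\Psi: D \to E$ with $p_0 \in \Psi(D)$. Form the associated parametrization $\widetilde\Psi(\rho,\tau^1,\tau^2) = \rho \Psi(\tau^1,\tau^2)$ of the cone $\mathcal C(\Psi(D)) \cap \mathcal N(\eps_0)$. The composition $v^\rho \circ \widetilde\Psi$ is real analytic on $(1-\eps_0, 1+\eps_0) \times D$, so around $\widetilde\Psi^{-1}(p_0) = (1,\tau_0^1,\tau_0^2)$ it admits a power series expansion in $(\rho-1, \tau^1 - \tau_0^1, \tau^2 - \tau_0^2)$, which regrouped reads $v^\rho = \sum_{\alpha\ge 0}(\rho-1)^\alpha v_\alpha^\rho$ on some neighborhood $U(p_0)$, with each $v_\alpha^\rho$ real analytic and, by construction, constant along every radial line $\{t\,q : t \in (1-\eps,1+\eps)\}$ with $q$ ranging in a small patch of $E$.

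The key independence statement is that each $v_\alpha^\rho$ at a point $q_0$ does not depend on the chosen parametrization. I would argue this by fixing $q_0$ in the overlap of two parametrizations and viewing the one-variable real analytic function $t \mapsto v^\rho(t q_0)$ on $(1-\eps, 1+\eps)$; its Taylor coefficients at $t=1$ are intrinsically determined, and comparing with both local expansions forces the two candidate coefficient functions to agree at $q_0$. This uniqueness together with the compactness of $E$ (mimicking Step 4 of Lemma \ref{basic}) lets me cover $E$ by finitely many charts and take the minimum radius $\widetilde{\eps}_2$, yielding a globally well-defined expansion $v^\rho = \sum_{\alpha\ge 0}(\rho-1)^\alpha v_\alpha^\rho$ on $\mathcal N(\widetilde{\eps}_2)$, with each $v_\alpha^\rho$ real analytic and constant along radial rays (hence descending to a real analytic function on $E$).

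Setting $\widetilde{\eps} := \min\{\widetilde{\eps}_1, \widetilde{\eps}_2\}$, both expansions hold on $\mathcal N(\widetilde{\eps})$ and add up to the claimed formula for $v$. The main obstacle I anticipate is precisely the invariance argument for the radial coefficients $v_\alpha^\rho$: showing that the local Taylor coefficients picked up in a chart are genuinely intrinsic requires the radial reduction to a single-variable real analytic function, after which uniqueness of one-dimensional Taylor expansions does the rest. The tangential expansion is already covered by Lemma \ref{basic}, so no new difficulty arises there.
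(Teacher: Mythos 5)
Your proposal is correct and follows essentially the same route as the paper: split off the radial part via $w = v - v^\rho\partial_\rho$, apply Lemma \ref{basic} to the tangential field $w$, expand the scalar $v^\rho$ in local real analytic charts, prove chart-independence by reducing to the one-variable Taylor expansion of $t\mapsto v^\rho(tq_0)$ at $t=1$, and globalize by compactness before taking $\widetilde{\eps}=\min\{\widetilde{\eps}_1,\widetilde{\eps}_2\}$. No gaps to report.
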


In a similar fashion we can establish the existence of the power series expansion in a tubular neighborhood of the ellipsoid $E$.

We first set up some notation. For each $p \in E$, let $N_{p}$ be the outward pointing normal unit vector to the ellipsoid $E$ at $p$. For each $\sigma \in (-\eps_0 , \eps_0 )$, we look at the ``distorted" ellipsoid $\tilde E_{\sigma}$ as given by
\begin{equation}\label{Esigma}
\tilde E_{\sigma} = \big \{ p + \sigma N_p : p \in E     \big \}.
\end{equation}
We can define the following tubular neighborhood $\mathcal{T}(\eps_0)$ by taking unions of $\tilde E_{\sigma}$.
\begin{equation}\label{tubular}
\mathcal{T}(\eps_0) =  \bigcup_{\sigma \in (-\eps_0 , \eps_0 )} \tilde E_{\sigma} .
\end{equation}

\begin{lemma}\label{Asummarynotbad}
Take $E = \{ (x,y,z) \in \mathbb{R}^3 : \frac{x^2}{a^2} + \frac{y^2}{a^2} + z^2 = 1   \}$. For some
sufficiently small $\eps_0 >0$, with respect to which we can consider the tubular open neighborhood $\mathcal{T}(\eps_0)$ of $E$ as given by \eqref{tubular}. Let $v$ be a real analytic vector field on $\mathcal{T}(\eps_0)$ which is tangential to every single ``distorted" ellipsoid  $\tilde E_{\sigma} = \{ p + \sigma N_p : p \in E \}$, for all $\sigma \in (-\eps_0 , \eps_0 )$. Then, it follows that there exists some $\widetilde{\eps} \in (0 ,\eps_0 )$, together with a uniquely determined sequence
$\{U_\alpha\}_{\alpha = 0}^{\infty}$ of real analytic vector fields on $\mathcal{T}(\widetilde{\eps})$
which satisfies the following properties:
\begin{itemize}
\item For each integer $\alpha \geq 0$, $U_{\alpha}$ is parallel along every single straight line segment $\{p + \sigma N_p : \sigma \in (-\widetilde{\eps} , \widetilde{\eps} ) \}$ , for all $p \in E$.
\item For each integer $\alpha \geq 0$, $U_{\alpha}$ is tangential to every single ``distorted" ellipsoid $\tilde E_{\sigma}$, for all $\sigma \in (-\widetilde{\eps} , \widetilde{\eps} )$.
\item $v = \sum_{\alpha =0}^{\infty} \sigma^{\alpha} U_{\alpha}$ holds on $\mathcal{T}(\widetilde{\eps})$ .
\end{itemize}
\end{lemma}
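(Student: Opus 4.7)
The plan is to mimic the proof of Lemma \ref{basic} verbatim, replacing the radial parametrization $\widetilde\Psi(\rho,\tau^1,\tau^2) = (\rho\Psi^1,\rho\Psi^2,\rho\Psi^3)$ by the \emph{normal} parametrization
\[
\hat\Psi(\sigma,\tau^1,\tau^2) \;=\; \Psi(\tau^1,\tau^2) + \sigma\, N_{\Psi(\tau^1,\tau^2)},
\]
where $\Psi : D \to E$ is a real analytic local parametrization. First I would verify that $\hat\Psi$ is itself real analytic, using that $N = \nabla\rho/|\nabla\rho|$ with $\rho$ real analytic on $\R^3\setminus\{0\}$, and then, shrinking $\eps_0$ if necessary, apply the real analytic inverse function theorem to conclude that $\hat\Psi$ is a diffeomorphism onto its image with real analytic inverse. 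Consequently the coordinates $(\sigma,\tau^1,\tau^2)$ are real analytic on the image, and the coordinate vector field $\partial_\sigma$ coincides with the Euclidean parallel extension of $N$ along each normal segment.

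Using the hypothesis that $v$ is tangential to every $\tilde E_\sigma$, I can then write $v = v^{\tau^1}\partial_{\tau^1} + v^{\tau^2}\partial_{\tau^2}$ with real analytic components $v^{\tau^k}$. The main difference from Lemma \ref{basic} is that, unlike the radial case where $\rho^{-1}\partial_{\tau^k}$ is automatically parallel along radial lines, here the coordinate vector field $\partial_{\tau^k}|_{p+\sigma N_p}$ is not parallel along normal lines. A direct differentiation of $\hat\Psi$ combined with the Weingarten equation \eqref{Weq} yields
\[
\partial_{\tau^k}\big|_{p+\sigma N_p} \;=\; (I - \sigma\, s_p)\,\partial_{\tau^k}\Psi(\tau),
\]
where $s_p$ is the shape operator of $E$ at $p = \Psi(\tau)$. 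Since $E$ is compact, $\|s_p\|$ is uniformly bounded, so $I - \sigma\, s_p$ is invertible with real analytic inverse on a small enough tubular neighborhood.

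Therefore the parallel-transported basis $\{\partial_{\tau^k}\Psi(\tau)\}_{k=1,2}$ can be written as a real analytic linear combination of $\{\partial_{\tau^k}|_{(\sigma,\tau)}\}$, and $v$ can be recast as
\[
v \;=\; \tilde v^1(\sigma,\tau)\,\partial_{\tau^1}\Psi(\tau) \,+\, \tilde v^2(\sigma,\tau)\,\partial_{\tau^2}\Psi(\tau),
\]
with $\tilde v^k$ real analytic in $(\sigma,\tau)$. Expanding each $\tilde v^k$ in a convergent power series in $\sigma$,
\[
\tilde v^k(\sigma,\tau) \;=\; \sum_{\alpha=0}^\infty \sigma^\alpha u_\alpha^k(\tau),
\]
I define the vector fields $U_\alpha^{(0)} := \sum_k u_\alpha^k \,\partial_{\tau^k}\Psi$ on $\Psi(D)$ and extend each by Euclidean parallel transport along the normal direction to obtain a real analytic vector field $U_\alpha$ on a neighborhood; by construction each $U_\alpha$ is parallel along normal segments and tangential to every $\tilde E_\sigma$, and produces the local expansion $v = \sum_\alpha \sigma^\alpha U_\alpha$.

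Independence of the $U_\alpha$ from the choice of $\Psi$ then follows exactly as in Step 3 of Lemma \ref{basic}: restrict to a single normal segment through an arbitrary point $p_0$ and compare the coefficients of the resulting single-variable real analytic power series, which are uniquely determined. Finally, a standard compactness argument on $E$, combined with this uniqueness, glues finitely many local expansions into a global expansion on $\mathcal T(\widetilde\eps)$ for a uniform $\widetilde\eps>0$. The only mildly subtle step is the invertibility of $I-\sigma s_p$ uniformly in $p\in E$; this is exactly where the compactness of $E$ is used, and it forces $\widetilde\eps$ to be strictly less than the reciprocal of the largest absolute principal curvature of $E$ (i.e., the reach of the hypersurface).
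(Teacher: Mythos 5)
Your proposal is correct and follows essentially the same route the paper intends: the paper proves Lemma \ref{basic} in detail for the radial (scaling) direction and then only asserts that the tubular-neighborhood case follows ``in a similar fashion,'' which is precisely the adaptation you carry out (local real analytic normal parametrization, componentwise power series in $\sigma$, uniqueness along each normal segment, and a compactness gluing). The one genuinely new ingredient you identify -- the factor $I-\sigma s_p$ coming from the Weingarten equation \eqref{Weq} and its invertibility for $|\sigma|$ below the reach of $E$ -- is handled correctly and is consistent with the explicit normal coordinates of Section \ref{setupd}, where these factors appear as $1-\sigma\kappa_i$.
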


\section{Forms and vectors}
Here, for completeness we establish some elementary relationships between cross products of vector fields and the wedge of the corresponding forms as well as curl and $\dd$. We first recall the definitions of the musical isomorphisms (lowering and raising the indices) as well as the definition of the Hodge star operator. We give the definitions for any Riemannian manifold $(M,g)$.  In the lemma below, we apply them to the Euclidean space $\R^3$.

If $X$ is a vector field, we can define a $1$-form, $X^\flat$, by
\[
X^\flat(Y)=g(X,Y).
\]
Then the inverse operator, $\sharp$, is defined by
\[
\alpha(Y)=g(\alpha^\sharp, Y). 
\]
Also, the Hodge $\star$ operator that takes $k$-forms to $(n-k)$-forms, is defined by

\[
\alpha\wedge \star \beta=g(\alpha, \beta)\Vol,
\]
where $\Vol$ is the volume form on the manifold in question.  
Now, we show the following lemma.
\begin{lemma}
Let $\alpha$ and $\beta$ be two $1$-forms on $\R^3$, and let $u$ be a vector field on $\R^3$.  Then the following holds
\begin{align}
\alpha ^\sharp \times \beta^\sharp&=\big(\star(\alpha \wedge \beta)\big)^\sharp,\label{crossandwedge}\\
\curl u&=(\star \dd u^\flat)^\sharp.\label{curlandd}
\end{align}
    
\end{lemma}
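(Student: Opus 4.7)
The plan is to verify both identities by direct computation in the standard orthonormal frame on $\R^3$, since both sides are tensorial and $\R^3$ admits a global orthonormal frame. Fix the frame $\{E_1, E_2, E_3\} = \{\partial_x, \partial_y, \partial_z\}$ with dual coframe $\{E^1, E^2, E^3\}$. Under the Euclidean metric, the musical isomorphisms act in the simplest possible way: $(a_i E^i)^\sharp = a_i E_i$ and $(a_i E_i)^\flat = a_i E^i$. The Hodge star on $\R^3$ is determined by
\begin{equation*}
\star(E^1 \wedge E^2) = E^3, \quad \star(E^2 \wedge E^3) = E^1, \quad \star(E^3 \wedge E^1) = E^2,
\end{equation*}
which was already recorded in the proof of Proposition \ref{HodgeLie}. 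These three observations reduce both statements to routine algebra.

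For the first identity, I would write $\alpha = \alpha_i E^i$, $\beta = \beta_j E^j$, so $\alpha^\sharp \times \beta^\sharp = (\alpha_i E_i) \times (\beta_j E_j)$. Expanding via the standard cross-product table $E_1 \times E_2 = E_3$, etc., gives
\begin{equation*}
\alpha^\sharp \times \beta^\sharp = (\alpha_2\beta_3 - \alpha_3\beta_2)E_1 + (\alpha_3\beta_1 - \alpha_1\beta_3)E_2 + (\alpha_1\beta_2 - \alpha_2\beta_1)E_3.
\end{equation*}
On the other hand, $\alpha \wedge \beta = \sum_{i<j}(\alpha_i\beta_j - \alpha_j\beta_i)E^i \wedge E^j$; applying $\star$ with the table above yields a $1$-form whose coefficients match those above, and raising indices produces exactly the same vector field.

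For the second identity, I would write $u = u_i E_i$ so that $u^\flat = u_i E^i$, and compute
\begin{equation*}
\dd u^\flat = \sum_{i<j}(\partial_i u_j - \partial_j u_i)\, E^i \wedge E^j.
\end{equation*}
Applying $\star$ and then $\sharp$, the coefficient of $E_1$ becomes $\partial_2 u_3 - \partial_3 u_2$, and similarly for the other components, which is exactly $\curl u$ in Cartesian coordinates. I do not expect a real obstacle here — the only thing to watch is sign bookkeeping in the Hodge star and making sure the $\wedge$-expansion is written with the correct antisymmetrization. Once both identities are verified in the standard frame, tensoriality lets us conclude they hold globally on $\R^3$.
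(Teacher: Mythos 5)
Your proposal is correct and follows essentially the same route as the paper: a direct componentwise computation in the standard Cartesian frame, using the cross-product and Hodge-star tables (your $\star(E^3\wedge E^1)=E^2$ is just the paper's $\star(E^1\wedge E^3)=-E^2$) and then raising indices. No gaps to report.
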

\begin{proof}
We can show both formulas using the Cartesian coordinates on $\R^3$.  

For \eqref{crossandwedge},
if $\alpha, \beta$ are $1$-forms, then
\[
\alpha=\alpha_i \dd x^i,\quad \beta=\beta_i \dd x^i,
\]
and using the definition of $\sharp$, we can show
\[
\alpha^\sharp=\alpha^i \partial_{x^i}=\langle \alpha^1, \alpha^2, \alpha^3\rangle, \quad
\beta^\sharp=\beta^i \partial_{x^i}=\langle \beta^1, \beta^2, \beta^3\rangle.
\]
We note that since we are using the standard metric on $\R^3$, $\alpha_i=\alpha^i$, and $\beta_i=\beta^i$ for all $i=1, 2, 3.$

Then, the left hand side in \eqref{crossandwedge} is
\[
\alpha ^\sharp \times \beta^\sharp=<\alpha^2\beta^3-\alpha^3\beta^2, \alpha^3\beta^1-\alpha^1\beta^3,\alpha^1\beta^2-\alpha^2\beta^1>.
\]
For the right hand, we first have
\[
\alpha \wedge \beta=(\alpha_i \dd x^i)\wedge (\beta_j \dd x^j)=\alpha_i \beta_j \dd x^i\wedge \dd x^j.
\]
So
\[
\alpha \wedge \beta=(\alpha_1 \beta_2-\alpha_2 \beta_1 )\dd x^1\wedge \dd x^2+(\alpha_1 \beta_3-\alpha_3 \beta_1 )\dd x^1\wedge \dd x^3+
(\alpha_2 \beta_3-\alpha_3 \beta_2 )\dd x^2\wedge \dd x^3.
\]
Using the definition of the Hodge star operator we can compute
\begin{align*}
\star( \dd x^1\wedge \dd x^2)&= dx^3\\
\star (\dd x^1\wedge \dd x^3)&=-dx^2\\
\star (\dd x^2\wedge \dd x^3)&=dx^1
\end{align*}
It follows
\[
\star (\alpha \wedge \beta)=(\alpha_1 \beta_2-\alpha_2 \beta_1 )\dd x^3-(\alpha_1 \beta_3-\alpha_3 \beta_1 )\dd x^2+
(\alpha_2 \beta_3-\alpha_3 \beta_2 )\dd x^1,
\]
which, after we apply the $\sharp$ operator, coincides with the above, as needed.

To show $\curl u=(\star \dd u^\flat)^\sharp$, we have
\[
\dd u^\flat=\sum_{i<j}(\partial_i u_j-\partial_j u_i)\dd x^i\wedge \dd x^j
\]
Then, using the definition of Hodge star we get
\[
\star \dd u^\flat=\sum_{i<j}(\partial_i u_j-\partial_j u_i)\star(\dd x^i\wedge \dd x^j)= (\partial_1 u_2-\partial_2 u_1)dx^3-(\partial_1 u_3-\partial_3 u_3)dx^2+(\partial_2 u_3-\partial_3 u_2)dx^1,
\]
which coincides with $\curl u$ after applying the $\sharp$ operator.
 \end{proof}
 \section*{Acknowledgements}
 The first author was funded in part by a grant from the Ministry of Science and Technology of Taiwan (109-2115-M-009 -009 -MY2). The second author was partially supported by a grant
from the Simons Foundation \# 585745.  The third author was partially supported by the National Science Foundation's MPS-Ascend Postdoctoral Fellowship No. 2316699.
 \bibliography{ref}

\def\cprime{$'$} \def\cprime{$'$}
\begin{thebibliography}{10}

\bibitem{Avrin96}
Joel~D. Avrin.
\newblock Large-eigenvalue global existence and regularity results for the
  {N}avier-{S}tokes equation.
\newblock {\em J. Differential Equations}, 127(2):365--390, 1996.

\bibitem{CC25}
Chi~Hin Chan and Magdalena Czubak.
\newblock The {G}auss formulas for {L}aplacians on submanifolds.
\newblock {\em arXiv:2212.11928}, 2025.

\bibitem{CCY_22}
Chi~Hin Chan, Magdalena Czubak, and Tsuyoshi Yoneda.
\newblock The restriction problem on the ellipsoid.
\newblock {\em Journal of Mathematical Analysis and Applications}, 527(1, Part
  2):127358, 2023.

\bibitem{CiarletII}
Philippe~G. Ciarlet.
\newblock {\em Mathematical elasticity. {V}ol. {II}}, volume~27 of {\em Studies
  in Mathematics and its Applications}.
\newblock North-Holland Publishing Co., Amsterdam, 1997.
\newblock Theory of plates.

\bibitem{Czubak2024}
Magdalena Czubak.
\newblock In search of the viscosity operator on {R}iemannian manifolds.
\newblock {\em Notices Amer. Math. Soc.}, 71(1):8--16, 2024.

\bibitem{DziukElliott}
Gerhard Dziuk and Charles~M. Elliott.
\newblock Finite element methods for surface {PDE}s.
\newblock {\em Acta Numer.}, 22:289--396, 2013.

\bibitem{Iftimie}
Drago\c{s} Iftimie.
\newblock The 3{D} {N}avier-{S}tokes equations seen as a perturbation of the
  2{D} {N}avier-{S}tokes equations.
\newblock {\em Bull. Soc. Math. France}, 127(4):473--517, 1999.

\bibitem{Jost}
J{\"u}rgen Jost.
\newblock {\em Riemannian geometry and geometric analysis}.
\newblock Universitext. Springer-Verlag, Berlin, fifth edition, 2008.

\bibitem{Lee_RG}
John~M. Lee.
\newblock {\em Introduction to {R}iemannian manifolds}, volume 176 of {\em
  Graduate Texts in Mathematics}.
\newblock Springer, Cham, 2018.
\newblock Second edition of [ MR1468735].

\bibitem{MitreaMonniaux}
Marius Mitrea and Sylvie Monniaux.
\newblock The nonlinear {H}odge-{N}avier-{S}tokes equations in {L}ipschitz
  domains.
\newblock {\em Differential Integral Equations}, 22(3-4):339--356, 2009.

\bibitem{Miura}
Tatsu-Hiko Miura.
\newblock On singular limit equations for incompressible fluids in moving thin
  domains.
\newblock {\em Quart. Appl. Math.}, 76(2):215--251, 2018.

\bibitem{Miura3}
Tatsu-Hiko Miura.
\newblock Navier-{S}tokes equations in a curved thin domain, {P}art {III}:
  {T}hin-film limit.
\newblock {\em Adv. Differential Equations}, 25(9-10):457--626, 2020.

\bibitem{Miura2}
Tatsu-Hiko Miura.
\newblock Navier-{S}tokes equations in a curved thin domain, {P}art {II}:
  {G}lobal existence of a strong solution.
\newblock {\em J. Math. Fluid Mech.}, 23(1):Paper No. 7, 60, 2021.

\bibitem{Miura1}
Tatsu-Hiko Miura.
\newblock Navier-{S}tokes equations in a curved thin domain, {P}art {I}:
  {U}niform estimates for the {S}tokes operator.
\newblock {\em J. Math. Sci. Univ. Tokyo}, 29(2):149--256, 2022.

\bibitem{Miura_stationary}
Tatsu-Hiko Miura.
\newblock Approximation of a solution to the stationary {N}avier-{S}tokes
  equations in a curved thin domain by a solution to thin-film limit equations.
\newblock {\em J. Math. Fluid Mech.}, 26(2):Paper No. 33, 35, 2024.

\bibitem{MTZ}
I.~Moise, R.~Temam, and M.~Ziane.
\newblock Asymptotic analysis of the {N}avier-{S}tokes equations in thin
  domains.
\newblock volume~10, pages 249--282. 1997.
\newblock Dedicated to Olga Ladyzhenskaya.

\bibitem{RaugelSell89}
Genevi\`eve Raugel and George~R. Sell.
\newblock \'{E}quations de {N}avier-{S}tokes dans des domaines minces en
  dimension trois: r\'egularit\'e{} globale.
\newblock {\em C. R. Acad. Sci. Paris S\'er. I Math.}, 309(6):299--303, 1989.

\bibitem{RS3}
Genevi\`eve Raugel and George~R. Sell.
\newblock Navier-{S}tokes equations in thin {$3$}d domains. {III}. {E}xistence
  of a global attractor.
\newblock In {\em Turbulence in fluid flows}, volume~55 of {\em IMA Vol. Math.
  Appl.}, pages 137--163. Springer, New York, 1993.

\bibitem{RS1}
Genevi\`eve Raugel and George~R. Sell.
\newblock Navier-{S}tokes equations on thin {$3$}d domains. {I}. {G}lobal
  attractors and global regularity of solutions.
\newblock {\em J. Amer. Math. Soc.}, 6(3):503--568, 1993.

\bibitem{RS2}
Genevi\`eve Raugel and George~R. Sell.
\newblock Navier-{S}tokes equations on thin {$3$}d domains. {II}. {G}lobal
  regularity of spatially periodic solutions.
\newblock In {\em Nonlinear partial differential equations and their
  applications. {C}oll\`ege de {F}rance {S}eminar, {V}ol.\ {XI} ({P}aris,
  1989--1991)}, volume 299 of {\em Pitman Res. Notes Math. Ser.}, pages
  205--247. Longman Sci. Tech., Harlow, 1994.

\bibitem{TemamZianeR}
R.~Temam and M.~Ziane.
\newblock Navier-{S}tokes equations in three-dimensional thin domains with
  various boundary conditions.
\newblock {\em Adv. Differential Equations}, 1(4):499--546, 1996.

\bibitem{TemamZiane}
R.~Temam and M.~Ziane.
\newblock Navier-{S}tokes equations in thin spherical domains.
\newblock In {\em Optimization methods in partial differential equations
  ({S}outh {H}adley, {MA}, 1996)}, volume 209 of {\em Contemp. Math.}, pages
  281--314. Amer. Math. Soc., Providence, RI, 1997.

\end{thebibliography}
\bibliographystyle{plain}

\end{document}